\crefname{hypothesis}{Hypothesis}{Hypotheses}
\Crefname{ALC@unique}{Line}{Lines}
\colorlet{texcscolor}{blue!50!black}
\colorlet{texemcolor}{red!70!black}
\colorlet{texpreamble}{red!70!black}
\colorlet{codebackground}{black!25!white!25}
\newcommand{\Real}{\mathbb{R}}
\lstdefinestyle{siamlatex}{%
  style=tcblatex,
  texcsstyle=*\color{texcscolor},
  texcsstyle=[2]\color{texemcolor},
  keywordstyle=[2]\color{texemcolor},
  moretexcs={cref,Cref,maketitle,mathcal,text,headers,email,url},
}
\DeclareTotalTCBox{\code}{ v O{} }
{ %fontupper=\ttfamily\color{texemcolor},
  fontupper=\ttfamily\color{black},
  nobeforeafter,
  tcbox raise base,
  colback=codebackground,colframe=white,
  top=0pt,bottom=0pt,left=0mm,right=0mm,
  leftrule=0pt,rightrule=0pt,toprule=0mm,bottomrule=0mm,
  boxsep=0.5mm,
  #2}{#1}
\patchcmd\newpage{\vfil}{}{}{}
\title{Improved stochastic rounding \thanks{This research was funded by the EU ECSEL Joint Undertaking under grant agreement no.~826452.}}
\author{Lu~Xia\footnotemark[2] , Martijn Anthonissen\footnotemark[2] , Michiel Hochstenbach\footnotemark[2] , Barry~Koren\thanks{Department of Mathematics and Computer Science, Eindhoven University of Technology, PO Box 513, 5600 MB Eindhoven, The Netherlands (\email{l.xia1@tue.nl}, \email{m.j.h.anthonissen@tue.nl}, \email{m.e.hochstenbach@tue.nl}, \email{b.koren@tue.nl}).} }
\begin{document}
\maketitle

%% ------------------------------------------------------------------
%% ABSTRACT
%% ------------------------------------------------------------------

\begin{abstract}
	Due to the limited number of bits in floating-point or fixed-point arithmetic, rounding is a necessary step in many computations. Although rounding methods can be tailored for different applications, round-off errors are generally unavoidable. When a sequence of computations is implemented, round-off errors may be magnified or accumulated. The magnification of round-off errors may cause serious failures. Stochastic rounding (SR) was introduced as an unbiased rounding method, which is widely employed in, for instance, the training of neural networks (NNs), showing a promising training result even in low-precision computations. Although the employment of SR in training NNs is consistently increasing, the error analysis of SR is still to be improved. Additionally, the unbiased rounding results of SR are always accompanied by large variances. In this study, some general properties of SR are stated and proven. Furthermore, an upper bound of rounding variance is introduced and validated. Two new probability distributions of SR are proposed to study the trade-off between variance and bias, by solving a multiple objective optimization problem. In the simulation study, the rounding variance, bias, and relative errors of SR are studied for different operations, such as summation, square root calculation through Newton iteration and inner product computation, with specific rounding precision. 
\end{abstract}

\begin{keywords}
	 Rounding mode, error analysis, stochastic rounding, variance and bias, multi-objective optimization problem, particle swarm optimization 
\end{keywords}

\begin{AMS}
	62J10, 65G50, 65Y04, 90C26, 97N20  
\end{AMS}

%% ------------------------------------------------------------------
%% END HEADER
%% ------------------------------------------------------------------

\section{Introduction}
\label{sec:intro}

In many computations, rounding is an unavoidable step, due to the limited number of bits in floating-point or fixed-point arithmetic. Many rounding schemes have been proposed and studied for different applications, such as floor, ceiling, round to the nearest, stochastic rounding, etc. These rounding modes normally have different round-off errors. When a sequence of computations is implemented, round-off errors may be accumulated and magnified. In real world problems, the magnification of round-off errors may cause severe failures. In pursuit of high accuracy, high-precision computations are generally employed, for which computing times may be long. 

To reduce computing times, low-precision computing is becoming increasingly popular, especially in the area of machine learning. In \cite{higham2019squeezing}, algorithms are proposed to squeeze matrices from double or single precision to half precision, using two-sided diagonal scaling.
% However, the choice of rounding method is not discussed in . 
In \cite{higham2019simulating}, some low-precision simulation results are compared for different rounding methods, e.g., directed rounding, rounding to nearest, and stochastic rounding. The detailed numerical analysis of each rounding method still needs to be developed.

An unbiased stochastic rounding (SR) scheme was applied in \cite{gupta2015deep} to train neural networks (NNs) using low-precision fixed-point arithmetic. The experiments show that where the deterministic rounding scheme fails, the training results using 16-bit fixed-point representation with the SR method are very similar to those computed in 32-bit floating-point precision. Inspired by \cite{gupta2015deep}, SR is  widely employed in training NNs in low-precision floating-point or fixed-point precision, see, e.g., \cite{na2017chip,ortiz2018low,wang2018training}. Although the employment of SR in training NNs is increasing, the error analysis of SR is still to be completed. Additionally, the unbiased rounding results of SR always have large variances. 

In this paper, SR is studied with respect to two aspects. First, numbers are rounded to a specific number of fractional bits and an upper bound of rounding variance is introduced and validated. Next, some general properties of SR are also proven. To study the trade-off between variance and bias of rounding results using SR, two new probability distributions are proposed, for which a multi-objective optimization problem (MOP) is formulated. The probability can be easily optimized according to user requirements on variance and bias, for instance by particle swarm optimization (PSO). Since the new probability distributions minimize both variance and bias, these rounding modes are potentially interesting for training NNs and numerical solution algorithms.

The remainder of the paper is organized as follows. The rounding rules of some general deterministic rounding methods are summarized in \cref{sec:DRmodels}. \Cref{sec:SRmodels} outlines the scheme of stochastic rounding and introduces the formula of the variance and some general properties. Then, in \cref{sec:probdistribution}, new probability distributions are proposed to discuss the trade-off between variance and bias, by solving the MOP. Furthermore, numerical simulation results of bias, variance and absolute value of relative errors are presented in \cref{sec:numericalstudy}, using different rounding schemes for summation, square root calculation through Newton iteration and inner product computation. Finally, conclusions are drawn in \cref{sec:conclusion}.  
%\newpage
\section{Deterministic rounding}\label{sec:DRmodels}
In this section, the schemes of some general deterministic rounding methods, such as directed rounding to an integer and rounding to the nearest integer, are summarized. The directed rounding is normally used in interval arithmetic and comprised of four rounding methods, e.g., rounding down (floor) and rounding up (ceiling), etc. The floor method rounds a number $x$ to the largest integer smaller than $x$, vice versa for ceiling. The rounding-to-the-nearest methods vary in different tie breaking rules, such as round half up, round half down, round half to even, round half to odd, etc.~\cite{kahan1996ieee}. Round half up is commonly used in financial calculations \cite{cowlishaw2003decimal}, where numbers smaller than half are rounded down and those larger than or equal to half are rounded up, vice versa for round half down. Rounding half to even is also called convergent rounding (CR), which is the default rounding mode used in IEEE 754 floating-point operations. It eliminates bias by rounding different numbers towards or away from zero. In contrast, rounding half to odd is rarely employed in computations, since it will never round to zero \cite{santoro1989rounding}. A summary of the aforementioned rounding schemes is given in \cref{tab:roundingmode}, together with some examples. 
\begin{table}[h!]
	{\footnotesize
		\caption{Summary of different deterministic rounding methods, and four illustrative examples.}\label{tab:roundingmode}
		\begin{center}
			\begin{tabular}{l|p{6cm}|l|l|l|l}
				\cline{1-6} \rule{0pt}{2.3ex}%
				\centering{Rounding mode}          & Rounding rule                        & 1.6&  0.5 &$-0.5$&$-1.6$ \\ \cline{1-6}	\rule{0pt}{2.3ex}%
				round down               & round toward negative infinity          & 1  & 0   & $-1$ &$-2$                     \\
				round up                 & round toward positive infinity          & 2  & 1   & $\phantom{-}0$  &$-1$\\ 
				round half up            & round to the nearest integer with tie rounding toward positive infinity   & 2& 1   & $\phantom{-}0$ & $-2$                                   \\
				round half down          & round to the nearest integer with tie rounding toward negative infinity   & 2& 0   & $-1$ &$-2$   \\
				round half to even       & round to the nearest integer with tie rounding toward the nearest even number & 2& 0   & $\phantom{-}0$  &$-2$\\
				round half to odd        & round to the nearest integer with tie rounding toward the nearest odd number  & 2& 1   & $-1$ &$-2$\\\cline{1-6}
			\end{tabular}
		\end{center}
	}
\end{table}
\section{Stochastic rounding}
\label{sec:SRmodels}
In this section, some properties of stochastic rounding (SR) are stated and proven. Furthermore, an algorithm is introduced to round numbers to a specific fractional digit, and the variance bound of the algorithm is proposed and validated.

The SR method is studied in \cite{nightingale1986gap,allton1989stochastic,price1993stochastic} and has recently been applied in \cite{gupta2015deep}. It is widely employed in training NNs \cite{hopkins2019stochastic,wang2018training}. Compared to deterministic rounding methods, it provides an unbiased rounding result by setting a probability that is proportional to the proximity of $x$. The definition of stochastic rounding is the following:
\begin{definition}
	\label{def:stochasticrounding}
	Let $x \in \Real$, and let $\delta$ be the rounding precision. Then the rounded value $\mathrm{fl}(x)$ of $x$ using SR is defined as
	\begin{align}
	\mathrm{fl}(x)= \begin{cases}
	\lfloor x \rfloor,  \quad  \quad  \quad  ~~~~    &\text{with probability}~p_1(x)=1-\frac{x-\lfloor x \rfloor}{\delta},\\
	\lfloor x \rfloor+\delta,  \quad  \quad  \quad  &\text{with probability}~p_2(x)=\frac{x-\lfloor x \rfloor}{\delta},
	\end{cases}
	\label{eq:stroundinggeneral}
	\end{align}
	where $\lfloor x \rfloor$ indicates the greatest representable floating/fixed-point number less than or equal to $x$ \cite{gupta2015deep}. 
\end{definition} 
For instance, if a floating-point number $0.4$ is rounded to integer with SR, so the rounding precision $\delta=1$, then 0.4 will be rounded down to 0 with probability 0.6 and rounded up to 1 with probability 0.4.
\subsection{General properties} 
In this subsection, some properties of SR will be proven. The first result is straightforward and is briefly introduced in \cite{allton1989stochastic}.

Let $\mathrm{fl}(x)$ be the random variable corresponding to the rounding process. Let $p_i$ be the corresponding probability of $\mathrm{fl}(x)=x_i$. 
\begin{corollary} 
	\label{co: ExpectedValueofSRrounding}
	The expected value of the rounded value $\mathrm{fl}(x)$ in SR is $x$. This means that the expected rounding error of $\mathrm{fl}(x)$ is 0.
\end{corollary}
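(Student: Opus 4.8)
The plan is to compute the expectation directly from \cref{def:stochasticrounding}. Writing $r := x - \lfloor x \rfloor$ for the fractional part (so $0 \le r < \delta$), the random variable $\mathrm{fl}(x)$ takes only the two values $\lfloor x \rfloor$ and $\lfloor x \rfloor + \delta$, with probabilities $p_1(x) = 1 - r/\delta$ and $p_2(x) = r/\delta$ respectively. I would first note that these are genuine probabilities: both lie in $[0,1]$ because $0 \le r < \delta$, and they sum to $1$; this makes the expectation well defined.

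Next I would simply form
\begin{align}
\mathbb{E}[\mathrm{fl}(x)]
 = \lfloor x \rfloor \, p_1(x) + (\lfloor x \rfloor + \delta)\, p_2(x)
 = \lfloor x \rfloor \Bigl(1 - \tfrac{r}{\delta}\Bigr) + (\lfloor x \rfloor + \delta)\tfrac{r}{\delta}.
\end{align}
Expanding the right-hand side, the two terms $\mp \lfloor x \rfloor r/\delta$ cancel and the term $\delta \cdot r/\delta = r$ survives, leaving $\lfloor x \rfloor + r = x$. Hence $\mathbb{E}[\mathrm{fl}(x)] = x$, and by linearity $\mathbb{E}[\mathrm{fl}(x) - x] = 0$, which is the claimed statement about the expected rounding error.

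There is essentially no obstacle here: the only thing to be careful about is the bookkeeping of the fractional part and the sign cancellation in the expansion, plus the (trivial) remark that $p_1(x), p_2(x)$ form a valid two-point distribution so that the formula $\mathbb{E}[\mathrm{fl}(x)] = \sum_i x_i p_i$ applies. I would present it as a two-line computation and state explicitly that it holds for every $x \in \Real$ and every rounding precision $\delta > 0$, since this unbiasedness is exactly the property invoked repeatedly in the subsequent variance analysis.
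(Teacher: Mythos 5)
Your computation is the same direct two-point expectation that the paper uses, and it is correct; introducing $r = x - \lfloor x \rfloor$ and checking that $p_1(x), p_2(x)$ form a valid distribution are minor cosmetic additions rather than a different route. No gaps.
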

\begin{proof}
	The expected value of $\mathrm{fl}(x)$ with discrete probability distribution can be calculated as
	\begin{align*}
	\mathbb{E}(\text{fl}(x)) &= x_1  p_1(x)+ x_2  p_2(x)\\
	&= \lfloor x \rfloor  \left(1-\frac{x-\lfloor x \rfloor}{\delta}\right) + (\lfloor x \rfloor+\delta)  \frac{x-\lfloor x \rfloor}{\delta} \\
	&= \lfloor x \rfloor + x-\lfloor x \rfloor = x,
	\end{align*}
	where $x_1=\lfloor x \rfloor$ and $x_2=\lfloor x \rfloor+\delta$.
\end{proof}
The variance of $\text{fl}(x)$ in rounding scheme \cref{eq:stroundinggeneral} is given by
	\begin{equation}
	\mathit{V}(\mathrm{fl}(x))=( \lfloor x \rfloor- x)^2  p_1(x)+( \lfloor x \rfloor +\delta - x)^2  p_2(x).
	\label{eq:varianceofSRrounding}
	\end{equation}

\subsection{Rounding to a specific number of fractional bits or decimal digits}
When a specific number of fractional bits is required, the rounding result can be easily achieved by multiplying with a scalar $\theta$. For instance, one fractional bit indicates a rounding precision $\delta=2^{-1}$ and the corresponding scalar is $\theta=2$. The procedure for rounding to a specific number of fractional bits is given in \cref{alg:roundspdpfloat}. 
\begin{algorithm}[h!]
	\caption{Round to a specific number of fractional bits or decimal digits.}
	\label{alg:roundspdpfloat}
	\begin{algorithmic}[1]		
		\STATE Definitions: Number of fractional bits: $n$, then scalar $\theta=2^n$ or $10^n$. 
		\STATE The scaled value $\widetilde{x}= \theta x  $.
		\STATE The approximated value of $\widetilde{x}$ is given as \begin{equation}
		\mathrm{fl}(\widetilde{x})= \begin{cases}
		\lfloor \widetilde{x} \rfloor \quad  \quad  \quad  \quad  ~ ~ &\text{with probability}~1-\frac{\widetilde{x}-\lfloor \widetilde{x} \rfloor}{1},\\
		\lfloor \widetilde{x} \rfloor+ 1  \quad  \quad    &\text{with probability}~\frac{\widetilde{x}-\lfloor \widetilde{x} \rfloor}{1},
		\end{cases}
		\label{eq:rounding2spdecimal}
		\end{equation}
		where $\lfloor \widetilde{x} \rfloor$ indicates the largest integer less than or equal to $\widetilde{x}$.
		\STATE Scaling it back, we have $\mathrm{fl}(x)=\frac{\mathrm{fl}(\widetilde{x})}{\theta}$.
	\end{algorithmic}
\end{algorithm}
To the authors' knowledge, the following propositions are not proven in literature. 
\begin{proposition}
	The expected value of rounding results, under the condition of rounding to the specific number of fractional bits $n\in\mathbb{N}$, through stochastic rounding \cref{alg:roundspdpfloat}, is still unbiased. 
\end{proposition}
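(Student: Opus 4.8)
The plan is to reduce the claim to the unbiasedness of plain stochastic rounding already established in \cref{co: ExpectedValueofSRrounding}. The key observation is that \cref{alg:roundspdpfloat} performs nothing more than SR applied to the scaled quantity $\widetilde{x}=\theta x$ with rounding precision $\delta=1$, followed by the \emph{deterministic} affine rescaling $\mathrm{fl}(x)=\mathrm{fl}(\widetilde{x})/\theta$, where $\theta=2^{n}$ or $10^{n}$ is a fixed positive constant.

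First I would verify that step~3 of \cref{alg:roundspdpfloat}, namely \cref{eq:rounding2spdecimal}, is exactly \cref{eq:stroundinggeneral} with $x$ replaced by $\widetilde{x}$ and $\delta$ replaced by $1$; this is immediate from comparing the two displays. Hence \cref{co: ExpectedValueofSRrounding} applies verbatim to $\widetilde{x}$ and yields $\mathbb{E}(\mathrm{fl}(\widetilde{x}))=\widetilde{x}=\theta x$. Next I would invoke linearity of expectation for the final rescaling: since $\theta$ is a fixed positive constant, $\mathbb{E}(\mathrm{fl}(x))=\mathbb{E}\!\left(\mathrm{fl}(\widetilde{x})/\theta\right)=\frac{1}{\theta}\,\mathbb{E}(\mathrm{fl}(\widetilde{x}))=\frac{\theta x}{\theta}=x$, which is the asserted unbiasedness.

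To make the phrase ``rounding to $n$ fractional bits'' fully rigorous, I would add one sentence checking that the two candidate outputs $\lfloor\widetilde{x}\rfloor/\theta$ and $(\lfloor\widetilde{x}\rfloor+1)/\theta$ are precisely the two numbers with $n$ fractional bits that bracket $x$, i.e. that $\lfloor\theta x\rfloor/\theta$ is the largest multiple of $\delta=1/\theta$ not exceeding $x$; this confirms that the scale--round--unscale procedure really coincides with stochastic rounding at precision $\delta=2^{-n}$ (or $10^{-n}$), rather than introducing a second, independent source of error in the rescaling step.

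I do not expect a genuine obstacle here: the statement is a direct corollary of \cref{co: ExpectedValueofSRrounding} together with the behaviour of expectation under an exact affine map. The only point deserving a line of care is the bookkeeping just mentioned --- that division by $\theta$ is exact in the relevant arithmetic, merely shifting the binary or decimal point --- so that no rounding error is created after the single SR step.
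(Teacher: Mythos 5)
Your proposal is correct and follows essentially the same route as the paper's own proof: identify \cref{eq:rounding2spdecimal} as \cref{eq:stroundinggeneral} applied to $\widetilde{x}=\theta x$ with $\delta=1$, invoke \cref{co: ExpectedValueofSRrounding} to get $\mathbb{E}(\mathrm{fl}(\widetilde{x}))=\widetilde{x}$, and conclude by linearity of expectation that $\mathbb{E}(\mathrm{fl}(x))=\frac{1}{\theta}\,\theta x=x$. Your additional remark that the rescaling by $\theta$ is exact and that the two candidate outputs bracket $x$ at spacing $1/\theta$ is a minor point of care the paper leaves implicit, not a different argument.
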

\begin{proof} If the number of fractional bits is $n$, a scalar can be defined as $\theta=2^{n}$. A random variable $x \in \Real$ can be scaled as $\widetilde{x}= \theta x$ and rounded to \cref{eq:rounding2spdecimal} with different probability distributions.
	
	According to \cref{co: ExpectedValueofSRrounding}, the expected value of rounding results can be calculated by
	\begin{equation*}
	\mathbb{E}(\mathrm{fl}(x))=\mathbb{E}\left(\frac{\mathrm{fl}(\widetilde{x})}{\theta}\right)=\frac{1}{\theta}\mathbb{E}(\mathrm{fl}(\widetilde{x}))=\frac{1}{\theta} \widetilde{x} =\frac{1}{\theta}   \theta  x =x.
	\end{equation*}
	So the bias of $\mathrm{fl}(x)$ is zero.
\end{proof}

\begin{proposition} The variance of rounding to the specific number of fractional bits $n$, using stochastic rounding \cref{alg:roundspdpfloat}, is bounded by $(\frac{1}{2\theta})^2$. 
	\label{prop:variancebound}
\end{proposition}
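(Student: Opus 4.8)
The plan is to reduce the statement to the variance formula already recorded in \cref{eq:varianceofSRrounding} and then exploit the scaling relation between $\mathrm{fl}(x)$ and $\mathrm{fl}(\widetilde{x})$. First I would note that since $\mathrm{fl}(x)=\mathrm{fl}(\widetilde{x})/\theta$ with $\theta$ a deterministic constant, the standard identity $\mathit{V}(aY)=a^2\mathit{V}(Y)$ gives $\mathit{V}(\mathrm{fl}(x))=\theta^{-2}\,\mathit{V}(\mathrm{fl}(\widetilde{x}))$. So it suffices to bound $\mathit{V}(\mathrm{fl}(\widetilde{x}))$ by $1/4$, because then $\mathit{V}(\mathrm{fl}(x))\le \tfrac{1}{4}\theta^{-2}=(\tfrac{1}{2\theta})^2$.

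Next I would compute $\mathit{V}(\mathrm{fl}(\widetilde{x}))$ explicitly using \cref{eq:varianceofSRrounding} with rounding precision $\delta=1$. Writing $r:=\widetilde{x}-\lfloor\widetilde{x}\rfloor\in[0,1)$, the two squared deviations are $(\lfloor\widetilde{x}\rfloor-\widetilde{x})^2=r^2$ and $(\lfloor\widetilde{x}\rfloor+1-\widetilde{x})^2=(1-r)^2$, with probabilities $p_1=1-r$ and $p_2=r$ respectively. Substituting and factoring yields
\begin{equation*}
\mathit{V}(\mathrm{fl}(\widetilde{x}))=r^2(1-r)+(1-r)^2 r = r(1-r)\bigl(r+(1-r)\bigr)=r(1-r).
\end{equation*}

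Finally I would observe that the function $g(r)=r(1-r)$ on $[0,1]$ attains its maximum at $r=\tfrac12$ with value $\tfrac14$ (a one-line calculus or completing-the-square argument), hence $\mathit{V}(\mathrm{fl}(\widetilde{x}))\le\tfrac14$ for every $\widetilde{x}$, and the claimed bound follows from the scaling step. There is essentially no hard obstacle here; the only points requiring care are (i) justifying the variance scaling law for the deterministic rescaling by $1/\theta$, and (ii) being explicit that the bound is uniform in $x$ because it reduces to maximizing the fixed quadratic $r(1-r)$ over the fractional part $r\in[0,1)$. It may also be worth remarking that the bound is attained in the limit $r\to\tfrac12$, so it is sharp.
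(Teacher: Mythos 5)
Your argument is correct and follows essentially the same route as the paper: both compute $\mathit{V}(\mathrm{fl}(\widetilde{x}))$ from \cref{eq:varianceofSRrounding} with $\delta=1$, reduce it to $r(1-r)$ in the fractional part $r=\widetilde{x}-\lfloor\widetilde{x}\rfloor$ (the paper writes this as $-\bigl(r-\tfrac12\bigr)^2+\tfrac14$ by completing the square), bound it by $\tfrac14$ at $r=\tfrac12$, and then apply the scaling law $\mathit{V}(\mathrm{fl}(x))=\theta^{-2}\mathit{V}(\mathrm{fl}(\widetilde{x}))$. The only difference is the order of the steps (you scale first, the paper scales last), which is immaterial.
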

\begin{proof} According to \cref{eq:varianceofSRrounding}, the variance of $\mathrm{fl}(\widetilde{x})$ obtained by \cref{alg:roundspdpfloat} is
	\begin{align}
	V(\mathrm{fl}(\widetilde{x}))&=(\lfloor \widetilde{x} \rfloor-\widetilde{x})^2 \left(1-\frac{\widetilde{x}-\lfloor \widetilde{x} \rfloor}{1}\right)+(\lfloor \widetilde{x}\rfloor +1-\widetilde{x})^2 \left(\frac{\widetilde{x}-\lfloor \widetilde{x} \rfloor}{1}\right)  \nonumber\\
	&= (\lfloor \widetilde{x} \rfloor-\widetilde{x})^2+(\lfloor \widetilde{x} \rfloor-\widetilde{x})^3+(\widetilde{x}-\lfloor \widetilde{x} \rfloor)-2(\lfloor \widetilde{x} \rfloor-\widetilde{x})^2-(\lfloor \widetilde{x} \rfloor-\widetilde{x})^3  \nonumber\\
	&= (\widetilde{x}-\lfloor \widetilde{x} \rfloor)-(\lfloor \widetilde{x} \rfloor-\widetilde{x})^2  \nonumber\\
	&= -\left(\widetilde{x}-\lfloor \widetilde{x} \rfloor-\frac{1}{2}\right)^2+\frac{1}{4}.
	\label{eq:variancerounding2spcdecimal}
	\end{align}
	Let $\Delta \widetilde{x} =\widetilde{x}-\lfloor \widetilde{x} \rfloor$, then $\Delta \widetilde{x} \in [0,1]$. As a result, \cref{eq:variancerounding2spcdecimal} has the maximum value $\frac{1}{4}$, when $\Delta \widetilde{x} =\frac{1}{2}$. Consequently, $V(\mathrm{fl}(x))=V\big(\frac{1}{\theta}\mathrm{fl}(\widetilde{x})\big)=\frac{1}{\theta^2} V(\mathrm{fl}(\widetilde{x}))\leq\frac{1}{4\theta^2}=\left(\frac{1}{2\theta}\right)^2$.
\end{proof}
To validate \cref{prop:variancebound}, a set of numbers from 0 to 2, with the smallest interval between two consecutive numbers equal to $10^{-4}$, has been rounded 10,000 times, under rounding-to-4-fractional-bit scenario, i.e., $\theta=2^{4}$. The corresponding variances are shown in \cref{fig:variance2decimal}. The blue line indicates the variance calculated using \cref{eq:varianceofSRrounding}, which is almost invisible due to the coverage of the red dashed line, which latter shows the population variance calculated over 10,000 observations using
\begin{subequations}
	\label{eq:variancegen}
	\begin{equation}
	\widehat{V}=\frac{1}{N}\sum_{i=1}^{N}( x_i-\mu )^2,
	\end{equation}
	where $\mu$ is the mean value of a vector $\mathbf{x}$ comprised of $N$ random variables:
	\begin{equation}
	\mu=\frac{1}{N}\sum_{i=1}^{N}x_i.
	\end{equation}
	\end{subequations}
 From the zoomed in subplot around $x=\frac{1}{2^5}=0.03125$ in \cref{fig:variance2decimal}, it can be observed that the blue line and the red dashed line are bounded by $2^{-10}\approx 9.77 \cdot 10^{-4}$, satisfying \cref{prop:variancebound}. According to \cref{eq:variancerounding2spcdecimal}, the variance is zero when $\widetilde{x}=\lfloor \widetilde{x} \rfloor$, with $\widetilde{x}=2^{-4}j$ in \cref{fig:variance2decimal}, where $j\in \mathbb{N}$. 
\begin{figure}[h!]
	\centering
	\includegraphics[width=0.6\columnwidth]{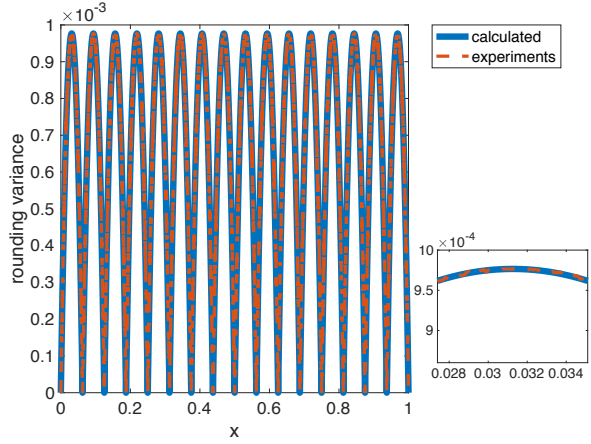}
	\caption{Variance of $x$ from 0 to 2 when $n=4$ (rounding to 4 fractional bits).} 
	\label{fig:variance2decimal}
\end{figure}
\begin{proposition} In stochastic rounding, it holds 
	\begin{equation*}
	\mathrm{fl}(\mathrm{fl}(\cdots \mathrm{fl}(\mathrm{fl}(x_1)~\mathrm{op}~x_2)~\mathrm{op}~\cdots)~\mathrm{op}~x_{N_s})=\mathrm{fl}(x_1)~\mathrm{op}~\mathrm{fl}(x_2)~\mathrm{op}~\cdots~\mathrm{op}~\mathrm{fl}(x_{N_s}),
	\end{equation*}
where $N_s$ indicates the number of terms for $\mathrm{op}\in\{+,-\}$.
	\label{prop: roundsum}
\end{proposition}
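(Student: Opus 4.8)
The plan is to read the asserted identity as an equality in distribution between the two random variables produced by the two procedures, under the convention (matching how SR is used in practice) that every invocation of $\mathrm{fl}$ draws its own random number, independently of all the others. I would reduce the $N_s$-term statement to a one-step lemma and then induct on $N_s$.

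The lemma I would isolate is: \emph{if $a$ is a representable number and $x\in\Real$, then $\mathrm{fl}(a\ \mathrm{op}\ x)$ has the same distribution as $a\ \mathrm{op}\ \mathrm{fl}(x)$}, where the $\mathrm{fl}(x)$ on the right uses fresh randomness. Its proof rests on the fact that, on the locally uniform grid of spacing $\delta$ underlying \cref{def:stochasticrounding}, adding or subtracting the grid point $a$ maps grid points to grid points, so $\lfloor a+x\rfloor=a+\lfloor x\rfloor$ and, when $x$ is not itself representable, $\lfloor a-x\rfloor=a-\lfloor x\rfloor-\delta$. Substituting these into \cref{eq:stroundinggeneral} and using that $p_1(x),p_2(x)$ depend on $x$ only through $x-\lfloor x\rfloor$: for $\mathrm{op}=+$ the two outcomes of $\mathrm{fl}(a+x)$ are $a+\lfloor x\rfloor$ and $a+\lfloor x\rfloor+\delta$ with probabilities $p_1(x),p_2(x)$, which is exactly $a+\mathrm{fl}(x)$ (in fact pathwise, for a shared random draw); for $\mathrm{op}=-$ they are $a-\lfloor x\rfloor$ and $a-\lfloor x\rfloor-\delta$, and after simplifying $1-\big(\delta-(x-\lfloor x\rfloor)\big)/\delta=(x-\lfloor x\rfloor)/\delta$ these carry probabilities $p_1(x),p_2(x)$ respectively, which is the distribution of $a-\mathrm{fl}(x)$. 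The case where $x$ is already representable is trivial, as then $\mathrm{fl}(x)=x$ deterministically.

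Granting the lemma, I would induct on $N_s$. For $N_s=1$ both sides are $\mathrm{fl}(x_1)$. For the step, write the left-hand side on $x_1,\dots,x_{N_s}$ as $\mathrm{fl}(I\ \mathrm{op}\ x_{N_s})$, where $I$ is the same nested left-hand expression on $x_1,\dots,x_{N_s-1}$; the point is that $I$, being an output of $\mathrm{fl}$, is representable, and its randomness is independent of the final rounding. Conditioning on $I$ and applying the lemma, the left-hand side has the distribution of $I\ \mathrm{op}\ \mathrm{fl}(x_{N_s})$ with $\mathrm{fl}(x_{N_s})$ independent of $I$; by the induction hypothesis $I$ has the distribution of $\mathrm{fl}(x_1)\ \mathrm{op}\ \cdots\ \mathrm{op}\ \mathrm{fl}(x_{N_s-1})$, and since the $\mathrm{fl}(x_i)$ are mutually independent, appending $\mathrm{op}\ \mathrm{fl}(x_{N_s})$ and evaluating left to right (exactly the order in which the right-hand side is parsed) gives the claim. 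No terms are ever reordered, so a mixture of $+$ and $-$ creates no difficulty.

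I expect the main obstacle to be bookkeeping rather than ideas: getting the subtraction case of the lemma exactly right — the floor and both probabilities shift, and one must verify they recombine into $p_1(x),p_2(x)$ — and being precise about the mode of equality (equality in distribution, with successive roundings independent; for pure summation it is in fact an almost-sure identity). I would also state explicitly the standing assumption that the representable set is uniformly spaced with step $\delta$ around all quantities that arise, which holds for fixed-point arithmetic and for the fixed-number-of-fractional-bits setting of \cref{alg:roundspdpfloat}, and which is what lets $\lfloor a\pm x\rfloor$ peel off the grid point $a$ cleanly.
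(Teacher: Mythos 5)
Your proof is correct, and at its computational core it rests on the same fact the paper uses: if $a$ is a representable grid point then $\lfloor a+x\rfloor=a+\lfloor x\rfloor$, so the outer rounding of $a~\mathrm{op}~x$ sees the same fractional part as $x$ and reproduces the distribution of $\mathrm{fl}(x)$ shifted by $a$. The organization differs, however. The paper proves the $N_s=2$ case for addition by enumerating the full three-outcome distribution of $\mathrm{fl}(x_1)+\mathrm{fl}(x_2)$ and matching it outcome-by-outcome against $\mathrm{fl}(\mathrm{fl}(x_1)+x_2)$, then writes down the general $(N_s+1)$-outcome distribution of the nested sum directly, with subtraction dispatched in one sentence (``replace $+$ by $-$''). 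You instead isolate a one-step lemma --- $\mathrm{fl}(a~\mathrm{op}~x)$ has the same distribution as $a~\mathrm{op}~\mathrm{fl}(x)$ for representable $a$ --- and induct, conditioning on the inner nested value. Your route supplies three things the paper leaves implicit or unaddressed: an explicit verification of the subtraction case, where the floor shifts by $-\delta$ and the two probabilities swap roles before recombining into $p_1(x)$ and $p_2(x)$; a clean treatment of mixed sequences of $+$ and $-$, which the paper's replacement remark does not really cover; and a precise statement of the mode of equality (equality in distribution under independent roundings) together with the standing assumption that the representable numbers form a uniform grid of spacing $\delta$ around all intermediate quantities, without which $\lfloor a+x\rfloor=a+\lfloor x\rfloor$ can fail in genuine floating-point arithmetic. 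What the paper's enumeration buys in exchange is the explicit multinomial-type distribution of the rounded sum, which is informative in itself but not needed for the stated identity.
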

\begin{proof}
	Assume $x_1$ and $x_2$ are rounded using the following rounding scheme
	\begin{equation*}
	        \text{fl}(x_i)=\begin{cases}
	        \lfloor x_i \rfloor, \quad  &\text{with probability~} p_i, \\
	        \lfloor x_i \rfloor +\delta, \quad  &\text{with probability~}1-p_i,
	        \end{cases}
	\end{equation*} where $i\in\{1,2\}$.
	Then
	\begin{align*}
	\text{fl}(x_1)+\text{fl}(x_2)=\begin{cases}
	\lfloor x_1 \rfloor +\lfloor x_2 \rfloor, \quad           &\text{with probability~} p_1p_2, \\
	\lfloor x_1 \rfloor +\lfloor x_2 \rfloor +\delta, \quad   &\text{with probability~} (1-p_1)p_2 +p_1(1-p_2),\\
	\lfloor x_1 \rfloor +\lfloor x_2 \rfloor +2\delta, \quad  &\text{with probability~} (1-p_1)(1-p_2).
	\end{cases}
	\end{align*}	
We also have
\begin{align*}
\text{fl}\big(\text{fl}(x_1)+ x_2\big)&=\begin{cases}
\lfloor \lfloor x_1 \rfloor+ x_2 \rfloor =  \lfloor x_1 \rfloor+\lfloor x_2 \rfloor, \\ \quad\text{with probability}~p_1p_2,\\
\lfloor \lfloor x_1 \rfloor+ \delta+ x_2\rfloor = \lfloor x_1 \rfloor+\lfloor x_2 \rfloor +\delta, \\\quad\text{with probability}~(1-p_1)p_2 +p_1(1-p_2),\\
\lfloor \lfloor x_1 \rfloor+ \delta+ x_2\rfloor+\delta=  \lfloor x_1 \rfloor+\lfloor x_2 \rfloor +2\delta, \\ \quad\text{with probability}~(1-p_1)(1-p_2),
\end{cases} \\                                                                                                                           
&=\text{fl}(x_1)~+~ \text{fl}(x_2).
\end{align*}	
For summation of $N_s$ terms,
\begin{align*}
\mathrm{fl}(\mathrm{fl}(x_1)+x_2)+\cdots)+x_{N_s})&=\begin{cases}
\lfloor x_1 \rfloor + \lfloor x_2 \rfloor +\cdots +\lfloor x_{N_s} \rfloor, \\\quad\text{with probability}~p_1p_2\cdots p_{N_s},\\
\lfloor x_1 \rfloor + \lfloor x_2 \rfloor + \cdots +\lfloor x_{N_s} \rfloor +\delta, \\
\quad\text{with probability}~(1-p_1)p_2\cdots p_{N_s}\\+p_1(1-p_2)\cdots p_{N_s}+\cdots+p_1p_2\cdots (1-p_{N_s}),\\
\quad \quad \quad \quad \vdots \\
\lfloor x_1 \rfloor + \lfloor x_2 \rfloor + \cdots +\lfloor x_{N_s} \rfloor +n\delta,\\
\quad\text{with probability}~(1-p_1)(1-p_2)\cdots (1-p_{N_s}),\\
\end{cases}\\
&=\text{fl}(x_1)+ \text{fl}(x_2)+\cdots+\text{fl}(x_{N_s}).
\end{align*}
The above relation also holds for subtraction, by replacing $+$ by $-$.
\end{proof}

The following propositions hold under the rounding to integer scenario, where $\delta=1$.
\begin{proposition}For multiplication using stochastic rounding, it holds
\begin{align}
\mathrm{fl}\big(\mathrm{fl}(x_1)\mathrm{fl}(x_2)\big)=\mathrm{fl}(x_1)\mathrm{fl}(x_2).
\label{eq:fl_multiplication}
\end{align}

	\label{prop:multiplication}
\end{proposition}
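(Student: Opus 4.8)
The plan is to exploit the fact that, in the rounding-to-integer setting where $\delta = 1$, every stochastically rounded value is automatically an integer, and that stochastic rounding acts as the identity on integers. First I would invoke \cref{def:stochasticrounding} with $\delta = 1$: each of $\mathrm{fl}(x_1)$ and $\mathrm{fl}(x_2)$ equals either $\lfloor x_i \rfloor$ or $\lfloor x_i \rfloor + 1$, and both of these lie in $\mathbb{Z}$. Consequently the product $\mathrm{fl}(x_1)\,\mathrm{fl}(x_2)$ is itself an integer, say $m \in \mathbb{Z}$.

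Next I would observe that stochastic rounding leaves any integer unchanged. For $m \in \mathbb{Z}$ one has $m - \lfloor m \rfloor = 0$, so in \cref{eq:stroundinggeneral} the probabilities become $p_1(m) = 1 - (m - \lfloor m \rfloor)/\delta = 1$ and $p_2(m) = 0$; hence $\mathrm{fl}(m) = \lfloor m \rfloor = m$ with probability one. Applying this with $m = \mathrm{fl}(x_1)\,\mathrm{fl}(x_2)$ gives $\mathrm{fl}\big(\mathrm{fl}(x_1)\,\mathrm{fl}(x_2)\big) = \mathrm{fl}(x_1)\,\mathrm{fl}(x_2)$, which is exactly \cref{eq:fl_multiplication}.

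There is no substantive obstacle in the argument; the only point worth emphasising is that the identity is genuinely tied to the hypothesis $\delta = 1$ (more generally, to situations in which the set of representable values is closed under the operation in question). For a general rounding precision $\delta$, the product of two representable numbers need not be representable, and then the inner $\mathrm{fl}$ can act nontrivially, so the statement would fail. I would therefore keep the scope explicit, as the sentence preceding the proposition already does, and I would add the remark that the same reasoning shows $\mathrm{fl}$ is idempotent, $\mathrm{fl}(\mathrm{fl}(x)) = \mathrm{fl}(x)$, whenever its argument is already a representable value — which is what makes the nesting in \cref{prop: roundsum} and here collapse.
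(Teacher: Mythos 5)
Your proposal is correct and rests on exactly the same idea as the paper's proof: in the $\delta=1$ setting the product $\mathrm{fl}(x_1)\mathrm{fl}(x_2)$ is an integer, and stochastic rounding fixes integers with probability one. The paper merely dresses this up by enumerating the four possible values of the product before making the same observation, so your version is a slightly leaner presentation of an identical argument.
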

\begin{proof}
		\begin{align*}
	\text{fl}(x_1)\text{fl}(x_2)=\begin{cases}
	\lfloor x_1 \rfloor \lfloor x_2 \rfloor, \quad & \text{with probability~}p_1p_2, \\
	\lfloor x_1 \rfloor \lfloor x_2 \rfloor +\lfloor x_2 \rfloor, \quad  &\text{with probability~}(1-p_1)p_2, \\
	\lfloor x_1 \rfloor \lfloor x_2 \rfloor +\lfloor x_1 \rfloor, \quad  &\text{with probability~}p_1(1-p_2),\\
	\lfloor x_1 \rfloor \lfloor x_2 \rfloor +\lfloor x_1 \rfloor +\lfloor x_2 \rfloor +1, \quad  &\text{with probability~}(1-p_1)(1-p_2).
	\end{cases}
	\end{align*}
	Since $\lfloor x_1 \rfloor \lfloor x_2 \rfloor$ is an integer, $\lfloor \lfloor x_1 \rfloor \lfloor x_2 \rfloor \rfloor=\lfloor x_1 \rfloor \lfloor x_2 \rfloor$. Consequently, $\mathrm{fl}\big(\mathrm{fl}(x_1)\mathrm{fl}(x_2)\big)=\mathrm{fl}(x_1)\mathrm{fl}(x_2)$.
	\end{proof}

\begin{proposition} When $x_1, x_2 \in (0,1)$ and $x_1x_2\leq\frac{1}{2}$, the worst-case relative round-off error is larger than or equal to 1 in \cref{eq:fl_multiplication}.
	\label{prop:worsterrorbound_multi}
\end{proposition}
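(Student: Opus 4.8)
The plan is to use that we are in the round-to-integer regime ($\delta=1$) with $x_1,x_2\in(0,1)$, so that $\lfloor x_1\rfloor=\lfloor x_2\rfloor=0$ and hence each $\mathrm{fl}(x_i)$ takes only the values $0$ (with probability $1-x_i$) or $1$ (with probability $x_i$). Consequently the rounded product $R:=\mathrm{fl}(x_1)\mathrm{fl}(x_2)$ lies in $\{0,1\}$, and by \cref{prop:multiplication} it coincides with $\mathrm{fl}(\mathrm{fl}(x_1)\mathrm{fl}(x_2))$, i.e.\ with the value actually stored on the left-hand side of \cref{eq:fl_multiplication}. The relative round-off error of this computation, measured against the exact product $x_1x_2>0$, is therefore $E=|R-x_1x_2|/(x_1x_2)$, and the worst case is the maximum of $E$ over the (finitely many, each of positive probability) realizations of $\mathrm{fl}(x_1)$ and $\mathrm{fl}(x_2)$.

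First I would split into the two possible values of $R$. If $R=0$ — which happens as soon as at least one factor rounds down — then $E=|0-x_1x_2|/(x_1x_2)=1$. If $R=1$ — which happens exactly when both factors round up, an event of positive probability $x_1x_2$ — then, using $0<x_1x_2<1$, we get $E=|1-x_1x_2|/(x_1x_2)=1/(x_1x_2)-1$.

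Next I would take the worst case over these outcomes: it equals $\max\{1,\,1/(x_1x_2)-1\}$. The hypothesis $x_1x_2\le\tfrac12$ gives $1/(x_1x_2)\ge2$, hence $1/(x_1x_2)-1\ge1$, so the worst-case relative error equals $1/(x_1x_2)-1\ge1$, with equality precisely when $x_1x_2=\tfrac12$. This yields the claimed bound.

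The computation is elementary, so there is no real obstacle; the only points deserving care are (i) fixing the precise meaning of ``worst-case relative round-off error'' as the maximum of $E$ over the possible stochastic-rounding realizations, and (ii) noting that the realization with both factors rounding up is genuinely attainable, which holds since $p_2(x_i)=x_i\in(0,1)$ for $x_1,x_2\in(0,1)$.
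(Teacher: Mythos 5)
Your proposal is correct and follows essentially the same route as the paper: both arguments reduce to $\lfloor x_1\rfloor=\lfloor x_2\rfloor=0$, enumerate the two attainable values of the rounded product, observe that the outcome $0$ gives relative error exactly $1$ while the outcome $1$ gives $\bigl|1-\tfrac{1}{x_1x_2}\bigr|$, and conclude from $x_1x_2\le\tfrac12$ that this is at least $1$. Your version is marginally tidier in explicitly taking the maximum over realizations and noting their positive probability, but the substance is identical.
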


\begin{proof} Applying SR to $x_2$, when $\text{fl}(x_2)=0$, the relative error of \cref{eq:fl_multiplication} is always 1. The worst-case scenario only occurs when $\text{fl}(x_2)=1$. Hence, the result of \cref{eq:fl_multiplication}, in the worst-case scenario, is
	\begin{align*}
		\text{fl}(x_1)\text{fl}(x_2)= \begin{cases}
			\lfloor x_1 \rfloor,\quad &\text{with probability}~ p,\\
			\lfloor x_1 \rfloor+1,\quad &\text{with probability}~ 1-p.
		\end{cases}
	\end{align*}
	The worst-case absolute relative round-off error is 
	\begin{align}
			\label{eq:relaerror}
		e_{\text{worst}}=\Big\vert\frac{ x_1 x_2-\text{fl}(x_1)\text{fl}(x_2)}{x_1x_2}\Big\vert = \begin{cases}
			\big\vert\frac{x_1x_2-\lfloor x_1 \rfloor}{x_1x_2}\big\vert,\quad &\text{with probability}~ p,\\[1mm] 
			\big\vert\frac{x_1x_2-(\lfloor x_1 \rfloor+1)}{x_1x_2}\big\vert,\quad &\text{with probability}~ 1-p.
		\end{cases}
	\end{align}
When $x_1 \in (0,1)$, we have $\lfloor x_1 \rfloor=0$ and \cref{eq:relaerror} becomes
\begin{equation}
\label{eq:eq:relaerror01}
e_{\text{worst}}=\begin{cases}
\big\vert1-\frac{\lfloor x_1 \rfloor}{x_1x_2}\big\vert=1,\quad &\text{with probability}~ p,\\[1mm] 
\big\vert1-\frac{\lfloor x_1 \rfloor}{x_1x_2}-\frac{1}{x_1x_2}\big\vert=\big\vert1-\frac{1}{x_1x_2}\big\vert, &\text{with probability}~ 1-p.
\end{cases}
\end{equation} 
 When $x_1, x_2 \in (0,1)$ and $x_1x_2\leq\frac{1}{2}$, we have $\frac{1}{x_1x_2}\geq2$, so $e_{\text{worst}}=\big\vert1-\frac{1}{x_1x_2}\big\vert\geq1$.
\end{proof}
For $x_1>1$, we have the following.
\begin{proposition}\label{remk:remark1} 
	When $x_1 \in (i,i+1)$, $x_2 \in (0,1)$ and $x_1x_2\leq\frac{i}{2}$, where $i\in \mathbb{N}^{+}$, we find in a similar way as above that the worst-case relative round-off error is larger than or equal to 1 in \cref{eq:fl_multiplication}.
\end{proposition}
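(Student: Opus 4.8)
The plan is to mimic the argument of \cref{prop:worsterrorbound_multi} almost verbatim, the only change being that $\lfloor x_1 \rfloor = i$ rather than $0$. First I would record that we are in the integer-rounding regime $\delta = 1$, that $x_1 \in (i,i+1)$ forces $\lfloor x_1 \rfloor = i$ and hence $\mathrm{fl}(x_1) \in \{i, i+1\}$, and that $x_2 \in (0,1)$ forces $\lfloor x_2 \rfloor = 0$ and hence $\mathrm{fl}(x_2) \in \{0,1\}$; since $x_1 > i \ge 1$ and $x_2 > 0$ we also have $x_1 x_2 > 0$, so the relative error in \cref{eq:fl_multiplication} is well defined. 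At this point the problem has been reduced to exactly the case structure of the previous proof, with $\lfloor x_1 \rfloor$ replaced by $i$ throughout \cref{eq:relaerror}--\cref{eq:eq:relaerror01}, which I would write out explicitly to make the parallel transparent.

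Next I would split on the outcome of $\mathrm{fl}(x_2)$. If $\mathrm{fl}(x_2) = 0$, then $\mathrm{fl}(x_1)\mathrm{fl}(x_2) = 0$ and the absolute relative round-off error equals $\lvert x_1 x_2 - 0\rvert / \lvert x_1 x_2\rvert = 1$ exactly, independently of $\mathrm{fl}(x_1)$. If $\mathrm{fl}(x_2) = 1$, then $\mathrm{fl}(x_1)\mathrm{fl}(x_2) = \mathrm{fl}(x_1) \in \{i, i+1\}$, so the absolute relative error is $\bigl\lvert 1 - \mathrm{fl}(x_1)/(x_1 x_2)\bigr\rvert$. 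Invoking the hypothesis $x_1 x_2 \le i/2$ gives $\mathrm{fl}(x_1)/(x_1 x_2) \ge i/(x_1 x_2) \ge 2$ when $\mathrm{fl}(x_1) = i$, and $\mathrm{fl}(x_1)/(x_1 x_2) \ge (i+1)/(x_1 x_2) > 2$ when $\mathrm{fl}(x_1) = i+1$; in either sub-case $\bigl\lvert 1 - \mathrm{fl}(x_1)/(x_1 x_2)\bigr\rvert \ge 1$. Hence every realization of the rounding produces an absolute relative error of at least $1$, so in particular the worst case is $\ge 1$, which is the claim.

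I do not expect a genuine obstacle here, since the statement is a direct generalization and the inequality chain is elementary. The one point that deserves care — and which I would flag as the crux — is the exact threshold: it is $x_1 x_2 \le i/2$, not the weaker bound $x_1 x_2 \le (i+1)/2$ one might naively read off from $x_1 < i+1$, that is precisely what is needed to push $i/(x_1 x_2)$ past $2$. I would also note that although we are optimizing a ``worst case'' over the random outcomes $\mathrm{fl}(x_2) \in \{0,1\}$ and, within $\mathrm{fl}(x_2)=1$, over $\mathrm{fl}(x_1) \in \{i,i+1\}$, no actual optimization over the probability $p$ is required, because all of these outcomes already individually yield an error $\ge 1$.
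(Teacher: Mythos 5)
Your proposal is correct and follows essentially the same route as the paper: both reduce to the case $\mathrm{fl}(x_2)=1$ (the case $\mathrm{fl}(x_2)=0$ giving relative error exactly $1$), write the error as $\big\vert 1-\mathrm{fl}(x_1)/(x_1x_2)\big\vert$ with $\mathrm{fl}(x_1)\in\{i,i+1\}$, and use $x_1x_2\leq\frac{i}{2}$ to force $i/(x_1x_2)\geq 2$ in each sub-case. Your explicit remark that the threshold must be $\frac{i}{2}$ rather than $\frac{i+1}{2}$ is a useful clarification but does not change the argument.
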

\begin{proof}
	For $x_1\in(i,i+1)$, we have
	\begin{equation*}
	e_\text{worst}=\begin{cases}
	\big\vert1-\frac{\lfloor x_1 \rfloor}{x_1x_2}\big\vert=\big\vert1-\frac{i}{x_1x_2}\big\vert,\quad &\text{with probability}~ p,\\[1mm] 
	\big\vert1-\frac{\lfloor x_1 \rfloor+1}{x_1x_2}\big\vert=\big\vert1-\frac{i+1}{x_1x_2}\big\vert,\quad &\text{with probability}~ 1-p.
	\end{cases}.
	\end{equation*}
	 When $x_1x_2\leq\frac{i}{2}$, we have $\big\vert1-\frac{i}{x_1x_2}\big\vert\geq1$ and $\big\vert1-\frac{i+1}{x_1x_2}\big\vert\geq\big\vert -1-\frac{1}{x_1x_2}\big\vert>1$.
\end{proof}
\begin{figure}[tbhp]
	\label{fig:contourplot}
	\centering
	\subfloat[Error with probability $p$]{\label{fig:contourplot1}\includegraphics[width=0.5\textwidth]{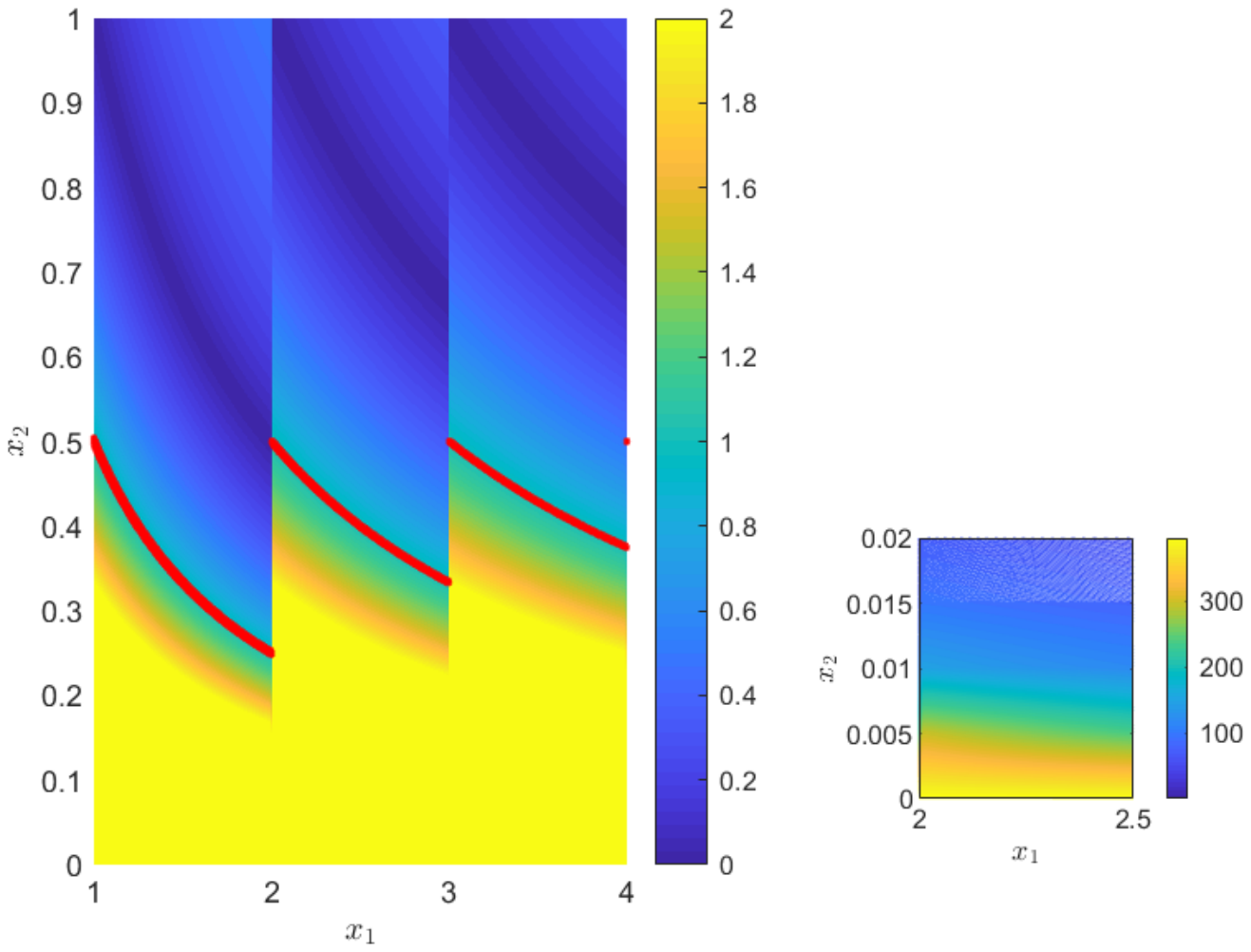}}
	\subfloat[Error with probability $1-p$]{\label{fig:contourplot2}\includegraphics[width=0.5\textwidth]{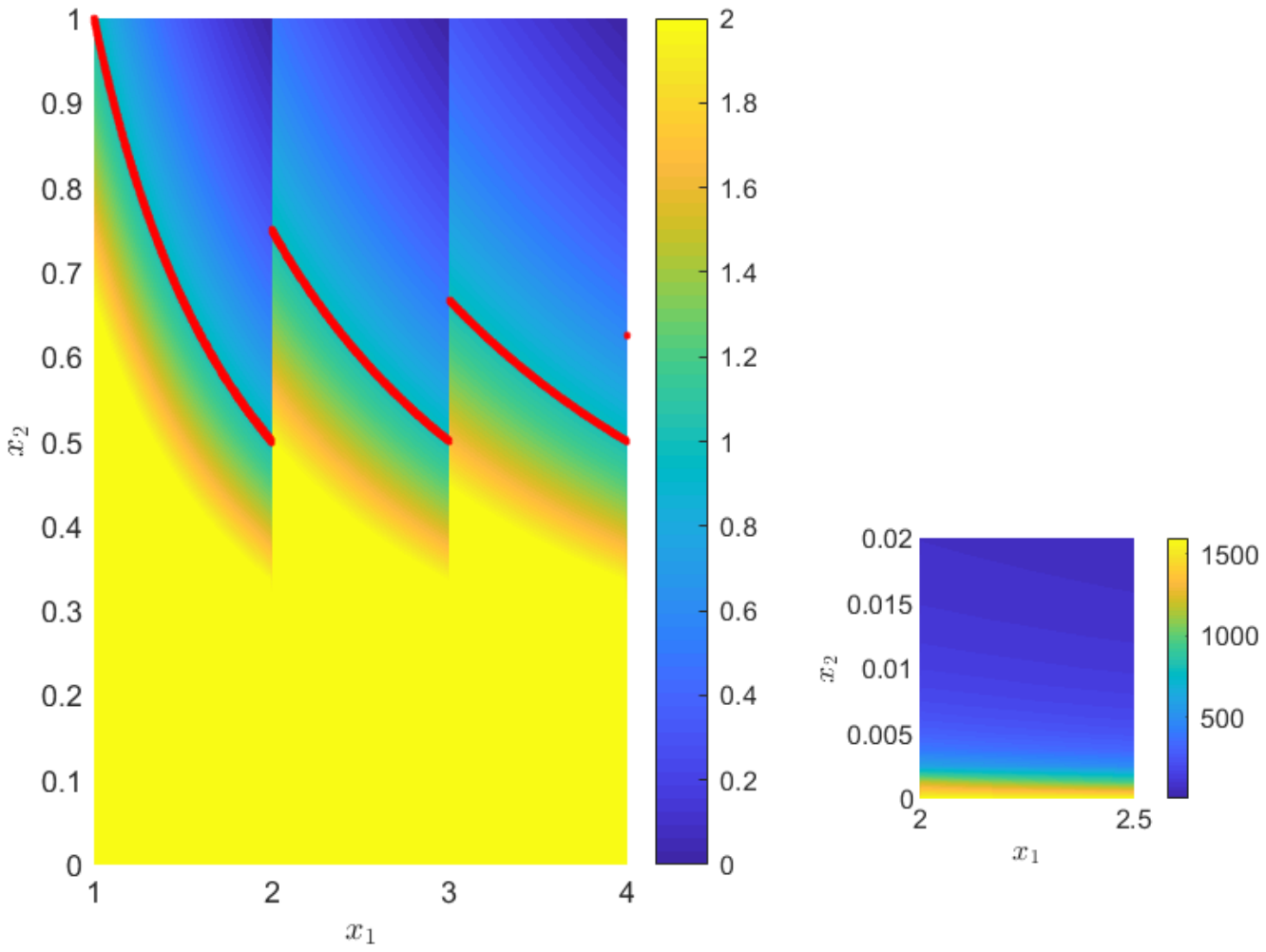}}
	\caption{Contour plots of worst-case relative error with respect to $x_1$ and $x_2$, with probabilities $p$ (a) and $1-p$ (b), where the red lines are the lines with $x_1x_2=\frac{i}{2}$ and $x_1x_2=\frac{i+1}{2}$ in (a) and (b), respectively.}
	%	\label{fig:VarianceandBiasforminimizedvariance}
\end{figure}
\cref{remk:remark1} will be used further in \cref{sec:innerproduct}.
\cref{fig:contourplot} shows the contour plots of the worst-case relative error with respect to $x_1$ and $x_2$, with probabilities $p$ (\cref{fig:contourplot1}) and $1-p$ (\cref{fig:contourplot2}). In the yellow areas, $e_{\text{worst}}\geq2$. At the red lines, $e_{\text{worst}}=1$. Specifically, the red lines are the lines with $x_1x_2=\frac{i}{2}$ and $x_1x_2=\frac{i+1}{2}$ in \cref{fig:contourplot1,fig:contourplot2}, respectively. It can be observed that $e_{\text{worst}}>1$ for both \cref{fig:contourplot1,fig:contourplot2}, when $x_1x_2<\frac{i}{2}$. Furthermore, the worst-case relative error increases when $x_2$ decreases. From the zoomed in subplots (small figures) in \cref{fig:contourplot}, it can be seen that the worst-case relative error can be very large, when $x_2$ is close to 0. For the same value of $x_2$, \cref{fig:contourplot2} shows the larger worst-case relative error than \cref{fig:contourplot1}. The aforementioned worst-case scenario will not occur, if floor rounding and rounding-to-the-nearest methods are employed, because then the numbers will always round towards zero if they are close to zero.
\section{Optimization of the probability distribution of stochastic rounding}
\label{sec:probdistribution}
In this section, a new probability distribution is proposed. To find it, a multi-objective optimization problem (MOP) is formulated. The probability is computed with different emphasis on variance and bias. To do so we use the scalarization method \cite[Ch.~2, p.~11--36]{caramia2008multi}. To meet the requirements of different computations, constraints on bias and variance are realized by adding a penalty function to the objective function, which will be defined in the following section. The optimization problem is solved using PSO.
\subsection{Problem formulation}
Instead of the probabilities in \cref{eq:stroundinggeneral}, a general probability distribution $p$ is considered. The new stochastic rounding with unknown probability, is defined as 
\begin{align}
\mathrm{fl}(x)= \begin{cases}
\lfloor x \rfloor, \quad  \quad  \quad  \quad  &\text{with probability}~p_1=p,\\
\lfloor x \rfloor+\delta,  \quad  \quad  \quad  &\text{with probability}~p_2=1-p,
\end{cases}
\label{eq:strounding}
\end{align}
where $\lfloor x \rfloor$ indicates the greatest representable floating-point or fixed-point number less than or equal to $x$ and where $\delta$ is the rounding precision. 
%Based on \cref{eq:variancesrroungding_general},
The variance of rounding scheme \cref{eq:strounding} is 
\begin{equation}
\mathit{V}(p) = (\lfloor x \rfloor-\bar{\mu})^2p+(\lfloor x \rfloor+\delta-\bar{\mu})^2(1-p),
\label{eq:variancestr1}
\end{equation}
where $\bar{\mu}$ is the expected rounding value, given by
\begin{equation}
\bar{\mu}=\lfloor x \rfloor\, p+(\lfloor x \rfloor+\delta)(1-p).
\label{eq:mu}
\end{equation}
Substitute \cref{eq:mu} into \cref{eq:variancestr1}, to find
\begin{equation}
\mathit{V}(p)=\delta^2(p-p^2).
\label{eq:variancestr}
\end{equation}
The bias is 
\begin{equation}
\mathit{B}(p) = (\lfloor x \rfloor p+(\lfloor x \rfloor+\delta)(1-p))-x.
\label{eq:bias}
\end{equation}
To find a trade-off between variance and bias, a MOP can be formulated as 
\begin{subequations}
	\begin{align}
	\underset{p}{\text{minimize}}& ~\mathit{V}(p),~\vert \mathit{B}(p)\vert,\label{eq:costfun}\\
	~\text{subject to}& ~\mathit{V}(p)\leq V_{\text{max}},~\vert \mathit{B}(p)\vert \leq B_{\text{max}},~0 \leq p\leq1,~x\in\mathcal{X},\label{eq:constraint}
	\end{align}
\end{subequations}
where $\mathcal{X}$ denotes the domain of the input variables. $V_{\text{max}}$ and $\mathit{B}_{\text{max}}$ can be set according to the users' own preferences, but feasibility should also be considered. Solutions of such MOPs are generally non-unique, since the objective functions are normally conflicting. In this case, Pareto optimality is often achieved in MOPs \cite{censor1977pareto}. An effective approach to find the trade-off between each conflicting objective function is the scalarization method \cite{caramia2008multi}, in which a single scaled fitness function is formulated. Furthermore, the constraints on variance and bias in \cref{eq:constraint} can be realized by adding a penalty ($k_i$) to the objective function \cref{eq:costfun}, as in \cite{xia2019constrained}, given by
\begin{subequations}
	\label{eq:optfun}
			\begin{align}		
			\underset{p}{\text{minimize}}& ~\big(\theta_1(\mathit{V}(p))^2+\theta_2(\mathit{B}(p))^2+\sum_{i=1}^{2}k_i\mathbb{I}_{[0,\infty)}(g_i)\big),\\
			\text{subject to}& ~0 \leq p\leq1,~x\in\mathcal{X},
			\end{align}
\end{subequations}
with $\theta_1+\theta_2=1$, $g_1=\mathit{V}(p)-V_{\text{max}}$ and $g_2=\vert \mathit{B}(p)\vert-B_{\text{max}}$, where $\mathbb{I}$ is an indicator function, having the value 1 when $g_i \in [0,\infty)$, and 0 elsewhere; and where $k_i$ is a constant, indicating the penalty on the $i$th constraint $g_i$. Here, $k_i$ is chosen to be 0 or sufficiently large to realize an unconstrained or constrained condition for $V$ and $B$, respectively. 

Due to the effect of penalties on the objective function, the gradient of the optimization problem is frequently not available. PSO is a gradient-free approach that is used extensively in solving global optimization problems \cite{PSO2015}. It solves problems by searching the best position among a group (swarm) of the candidate solutions (particles). The goal is to find the globally best position by comparing each particle's own best position to its neighbor's best position. Problem \cref{eq:optfun} can be solved using the same PSO algorithm as in \cite{xia2019constrained}. It should be noted that the choice of the optimization method, for instance PSO, is not the crucial part of this study, since the optimal probability distribution can be calculated offline and is not necessarily computed during each rounding process. This paper provides a method to obtain an improved stochastic rounding method with customized rounding variance and bias. 

\subsection{Four probability distributions}
\label{sec:fourdistribution}
In this section, the optimization problem is solved four times, each time with a different emphasis on variance and bias. The resulting variances and biases are compared.
\subsubsection{Bias minimization}
\label{sec:Bias minimization}
If only bias is minimized in \cref{eq:optfun} without any constraint, $\theta_1$ is set to 0. According to \cref{eq:rounding2spdecimal}, $\delta$ can be chosen as 1 for any number of fractional bits, after proper scaling. The probability distribution, and the corresponding variance and bias are shown in \cref{fig:VarianceandBiasforStochasticrounding}. The probability distribution found is exactly the same as in the SR method. It can be calculated analytically by setting \cref{eq:bias} to 0, so 
\begin{align*}
\left(\lfloor x \rfloor p+(\lfloor x \rfloor+\delta)(1-p)\right)-x=0.
\end{align*}
We find, $p=1-\frac{x-\lfloor x \rfloor}{\delta}$, as in \cref{eq:stroundinggeneral}. From \cref{fig:VarianceandBiasforStochasticrounding}, it can be concluded that the bias is zero for all $x$ and the variance is highest at the tie point. This distribution is repeated for every interval in $\mathcal{X}$.
\begin{figure}[http]
	\centering
	\includegraphics[width=0.6\columnwidth]{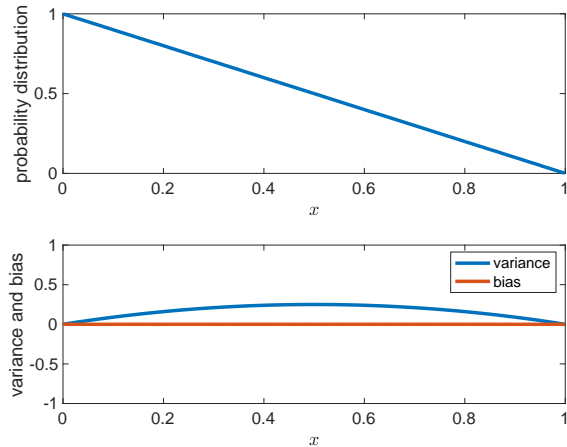}
	\caption{Probability distribution (top) and corresponding variance and bias (bottom) with respect to input variable x, for bias minimization.} 
	\label{fig:VarianceandBiasforStochasticrounding}
\end{figure}
\subsubsection{Variance minimization}
\label{sec:Variance minimization}
\begin{figure}[tbhp]
	\centering
	\subfloat[$p=0$]{\label{fig:a}\includegraphics[width=0.5\textwidth]{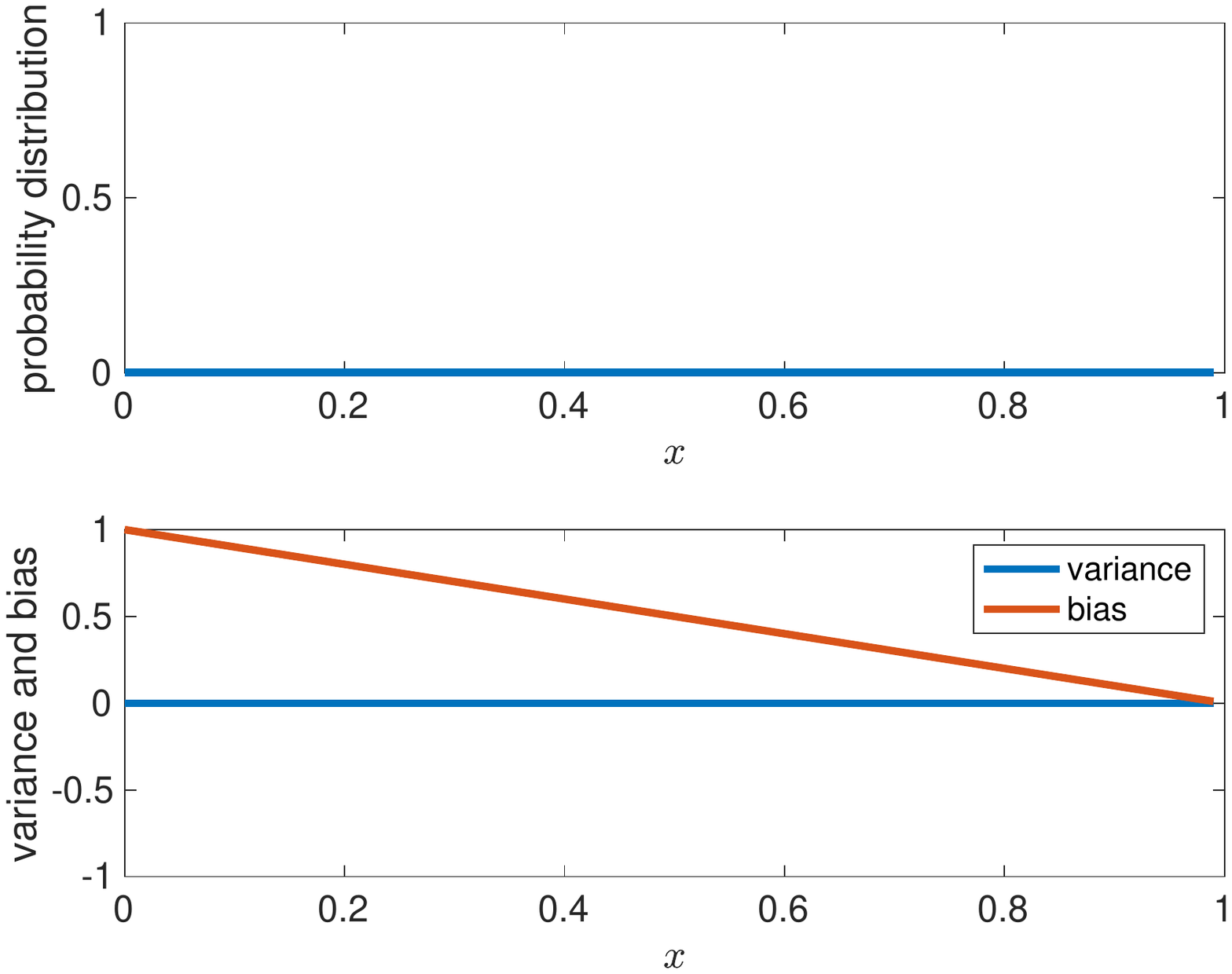}}
	\subfloat[$p=1$]{\label{fig:b}\includegraphics[width=0.5\textwidth]{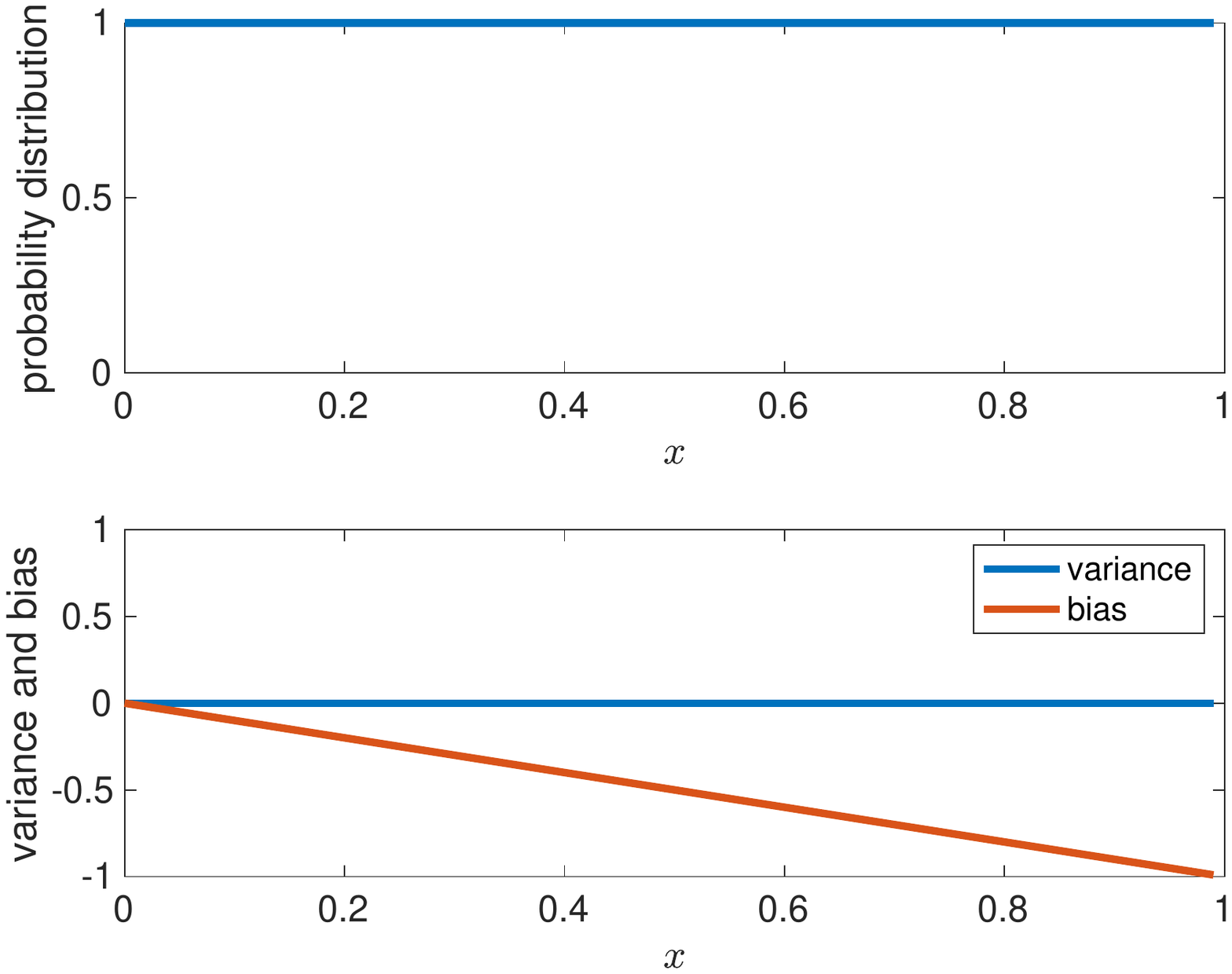}}
	\caption{Probability distribution (top) and corresponding variance and bias (bottom) with respect to input variable x, for variance minimization, when $p=0$ (a) and $p=1$ (b).}
	\label{fig:VarianceandBiasforminimizedvariance}
\end{figure}
\begin{figure}[http]
	\centering
	\includegraphics[width=0.6\columnwidth]{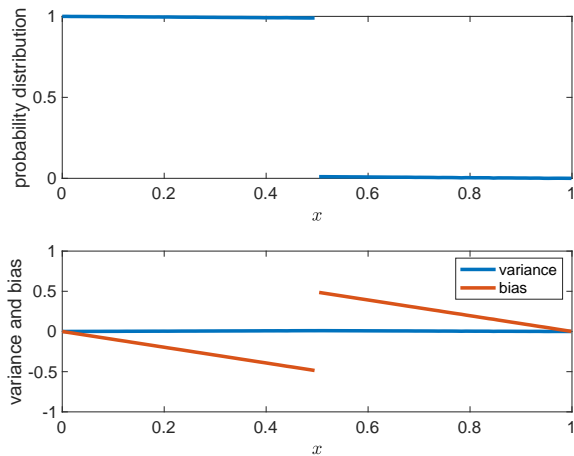}
	\caption{Probability distribution (top) and corresponding variance and bias (bottom) with respect to input variable x, for rounding-to-the-nearest method.} 
	\label{fig:VarianceandBiasforminimizedvariance3}
\end{figure}
When variance is the only objective function, and no constraints are considered, $\theta_2=0$. Based on \cref{eq:variancestr}, the variance is 0 when $p=0$ or $p=1$. These two choices are the ceiling and the floor method, respectively. It can be seen from \cref{fig:VarianceandBiasforminimizedvariance} that the optimal probability distribution of variance minimization is not unique; the probability can be either 0 (\cref{fig:a}) or 1 (\cref{fig:b}), resulting in a bias, which either equals $\delta+\lfloor x \rfloor-x$ or $\lfloor x \rfloor -x$, respectively. If in \cref{eq:optfun}, variance is multiplied with a large parameter and bias with a small parameter, e.g., $\theta_1=0.98$ and $\theta_2=0.02$, we find the probability distribution given in \cref{fig:VarianceandBiasforminimizedvariance3}. The probability distribution is similar to the rounding to the nearest integer with different tie breaking rules such as round half up, round half down, round half to even and round half to odd, as summarized in \cref{tab:roundingmode}. Comparing \cref{fig:VarianceandBiasforminimizedvariance,fig:VarianceandBiasforminimizedvariance3}, the resulting bias using rounding to the nearest is twice as small as that for the floor or ceiling method. Among all the methods of rounding to the nearest integer, CR is the default rounding mode in IEEE 754 floating-point operations and has the probability distribution shown in \cref{fig:VarianceandBiasforminimizedvariance3}. CR will be further studied and compared with stochastic rounding methods in \cref{sec:numericalstudy}.

\subsubsection{Trade-off between bias and variance}
\label{sec:Trade off between bias and variance}
\begin{figure}[tbhp]
	\centering
	\subfloat[]{\label{fig:VarianceandBiasforSameimportance}\includegraphics[width=0.5\textwidth]{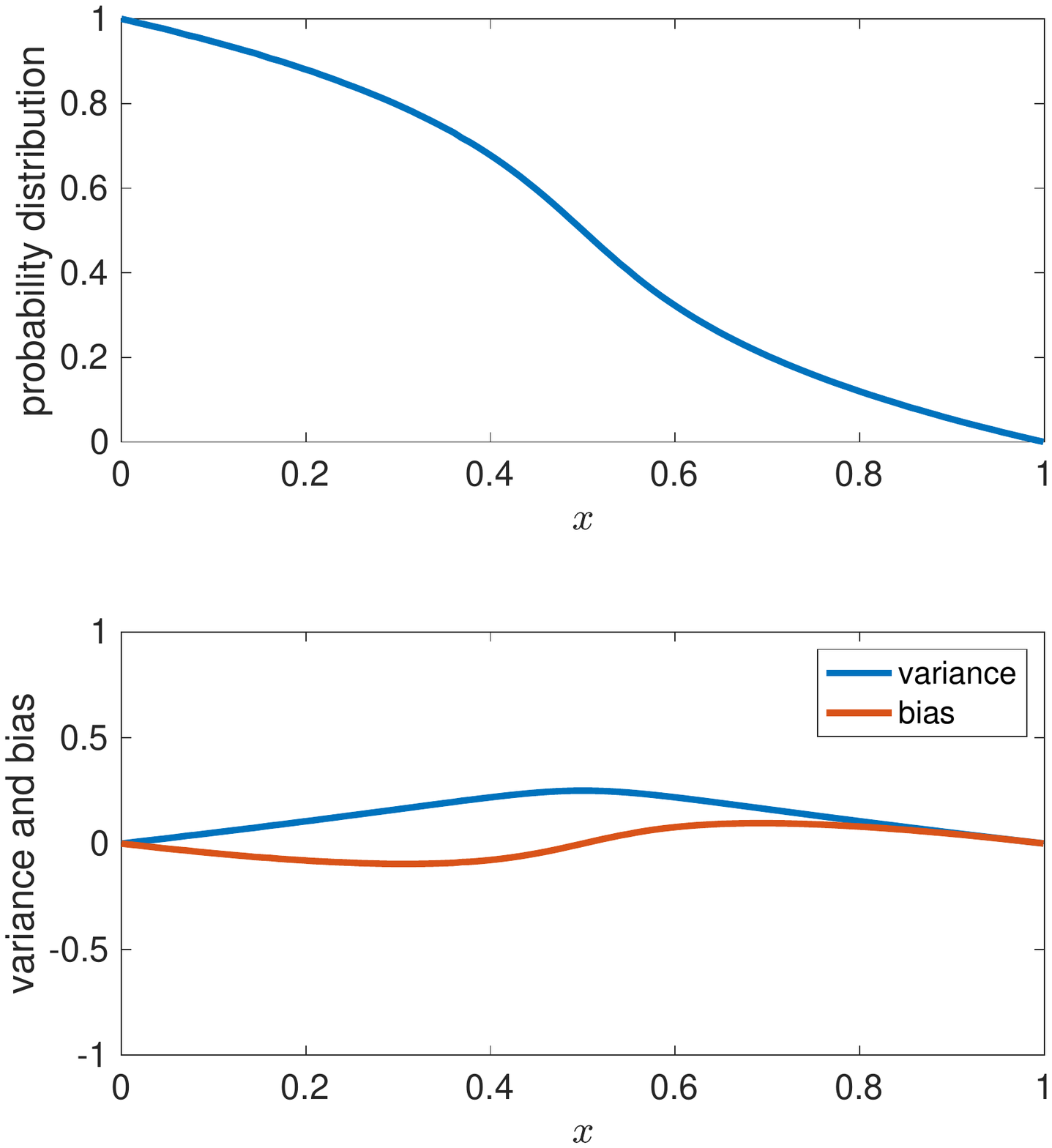}}
	\subfloat[]{\label{fig:VarianceandBiasforconstrainedbias}\includegraphics[width=0.5\textwidth]{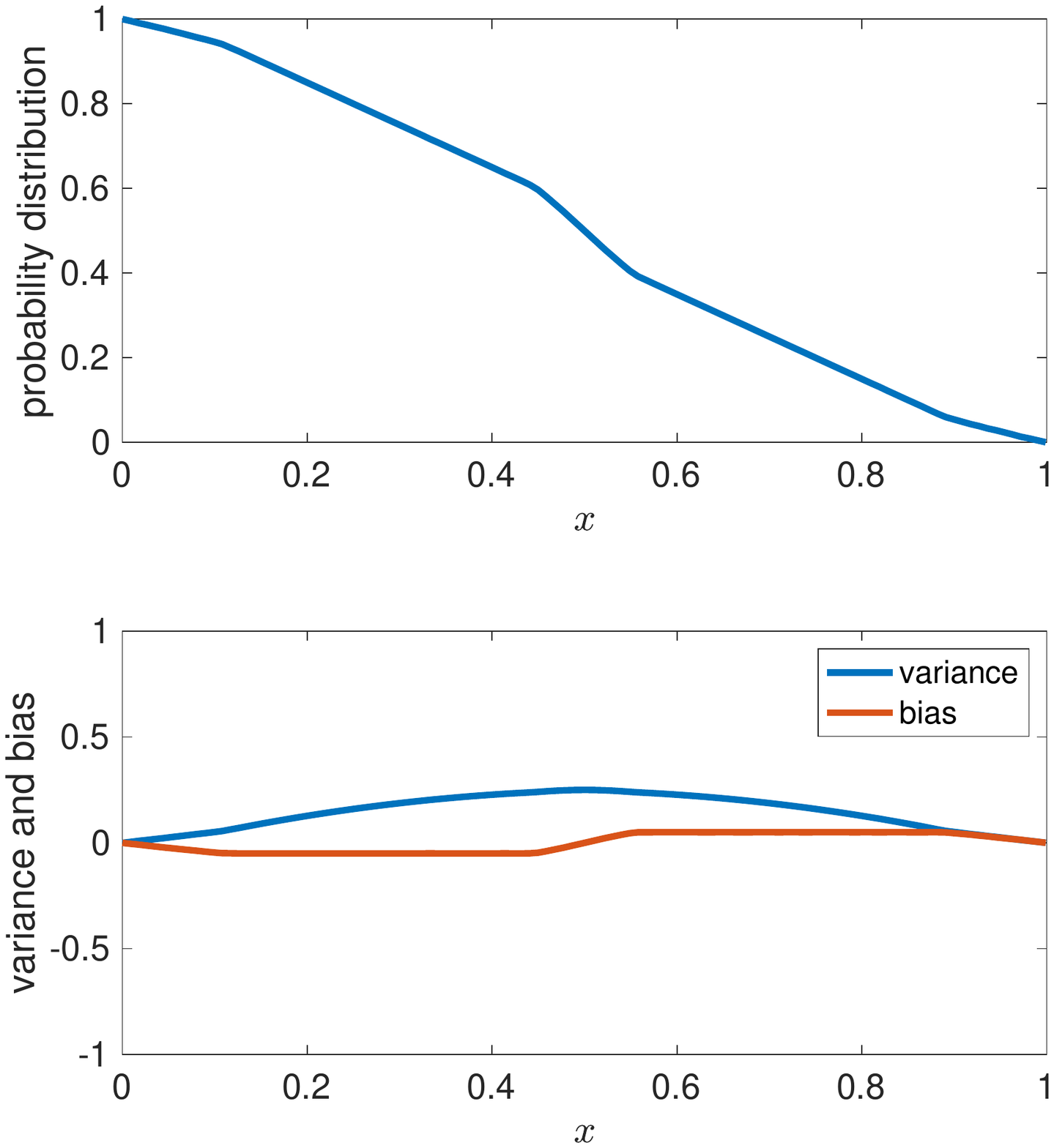}}
	\caption{Probability distribution (top) and corresponding variance and bias (bottom) with respect to input variable x, for both bias and variance minimization (a) and for constrained bias and variance minimization (b).}
%	\label{fig:VarianceandBiasforminimizedvariance}
\end{figure}
Considering the trade-off between bias and variance, it is challenging to find a good balance between both. Assume bias and variance are equally important in our rounding scheme, which means $\theta_1=\theta_2=0.5$ in \cref{eq:optfun}. This rounding scheme is called Distribution 1, shortly D1, in the remainder of this paper. Under unconstrained conditions, i.e., $k_1=k_2=0$, which means no constraints are set to either bias or variance, the optimized probability distribution and corresponding variance and bias are shown in \cref{fig:VarianceandBiasforSameimportance}. It can be observed that the variance is slightly reduced compared to the SR method but still considerably larger than that of the CR method, vice versa for the bias. Furthermore, the probability distribution is no longer linear.

\subsubsection{Constrained bias}
\label{sec:Constrained bias}
According to \cref{eq:variancestr}, the variance cannot be zero in rounding method \cref{eq:strounding} if $p\neq 0~\text{or}~p\neq1$, but a constraint can be imposed to the bias to guarantee a better rounding result. It should be noted that the maximum value of the bias can be set according to the user requirement. The value of the penalty can be set arbitrarily, as long as it is sufficiently larger than the value of the objective function when $k_i=0$. Assume $B_{\text{max}}=0.05$. To achieve this constraint, a penalty $k_2=10^{10}$ will be added if $g_2\geq 0$.  The resulting rounding scheme is called Distribution 2, shortly D2, in the remainder of this paper. \cref{fig:VarianceandBiasforconstrainedbias} shows the probability distribution and the corresponding variance and bias, where the bias is limited to 0.05 and the variance is slightly larger than for scheme D1. Consequently, the probability distribution is also tailored to meet the requirement. 

The smallest variance is generally achieved with the largest bias and vice versa. Through optimization scheme \cref{eq:optfun}, a trade-off can be easily obtained within constraints.

\section{Numerical experiments}
\label{sec:numericalstudy}
In this section, the rounding methods SR, CR, D1 and D2 are compared with respect to the absolute value of bias ($\vert B \vert$), variance ($V$) and average absolute value of relative error ($e$) in some numerical experiments. First, experiments are done for the sum operation, with different input distributions. Next, the performance for the square root operation is studied, in which Newton's method is employed to iteratively compute the result. Finally, experiments are done for the inner product operation.  
\subsection{Summation}
Due to the tie-breaking rule of CR and the stochastic behavior of SR, D1 and D2 rounding, the distribution of input variables will influence the rounding result. For instance, if the input variables are distributed in $[0,1]$, where we define 0 to be even, the rounding result of $0.5$, using CR, will always be biased, it will always be rounded to the even number 0. To study the influence of different input distributions on the rounding result, in this section, the experiments will be studied with four input distributions:
\begin{itemize}
	\item \textbf{Case I}: repeated numbers distributed in an odd number of intervals;
	\item \textbf{Case II}: repeated numbers distributed in an even number of intervals;
	\item \textbf{Case III}: non-repeated numbers distributed in an odd number of intervals;
	\item \textbf{Case IV}: non-repeated numbers distributed in an even number of intervals.
\end{itemize}
\subsubsection{Generation of input numbers}
If numbers are distributed uniformly, the probability of the presence of repeated numbers depends on the number of samples in each interval, $N_s$. Specifically, a large value of $N_s$ leads to a larger probability of repeated numbers. In our simulation study, the repeated numbers are obtained randomly using a large number of samples in a small interval, and vice versa for non-repeated numbers. For Case I, a set of uniformly distributed random numbers is generated between $[0,1]$, using the Matlab function \code{rand}, where the number of samples equals $N_s=10,000$. The same amount of numbers are randomly generated, for Case II, in $[0,2]$. For Case III, only 10 samples are randomly generated in $[0,1]$, to avoid repeated numbers. Additionally, 20 samples are generated in $[0,2]$ for Case IV. It should be noted that the input numbers are only generated once for each case, and then kept fixed for the different rounding methods.

\subsubsection{Numerical test}
To each of the aforementioned input distributions, we apply the summation operation given by
\begin{equation}
y=\sum_{i=1}^{N_s} x_i.\nonumber
\end{equation}
According to \cref{prop: roundsum}, the rounding result may be reformulated as 
\begin{align}
\mathrm{fl}(y)&=\mathrm{fl}(\mathrm{fl}(\mathrm{fl}(x_1)+x_2)+ \dots + x_{N_s})\nonumber\\
&=\mathrm{fl}(x_1)+\mathrm{fl}(x_2)+ \dots +\mathrm{fl}(x_{N_s}).
\label{eq:roundedsum}
\end{align}
Variance is computed according to \cref{eq:variancegen} and bias is calculated using 
\begin{equation}
B(y)=\mathbb{E}\left(\mathrm{fl}(y)\right)-\mathbb{E}(y).\nonumber
\end{equation}
Additionally, the average absolute value of the relative error is defined as
\begin{equation}
e =\frac{1}{N}\sum_{i=1}^N \frac{\vert \mathrm{fl}(y_i)-y_i \vert }{y_i},\nonumber
\end{equation}
where $N$ is the number of repetitions of the experiment. In this study, 10,000 repetition of experiments are made with the same input for all stochastic rounding methods. All the summation outcomes are rounded to integers for each rounding process. \cref{fig:barbias} shows the normalized absolute value of the bias for rounding methods SR, CR, D1 and D2, for Cases I-IV. The largest bias is always obtained by CR, if the input variables are repeated or distributed in an odd number of intervals, for Cases I-III. This can be explained by the optimization result given in \cref{sec:fourdistribution}, where the bias of CR is larger than SR, D1 and D2. For Case IV, CR realizes an unbiased result, since the input variables are distributed in an even number of intervals, in such a way that the rounding bias in the interval $[0,1]$ is compensated by that in $[1,2]$. A small bias is obtained by SR for Cases I and II, owing to the unbiased property of SR. In general, the biases caused by D2 are always smaller than D1, which is an obvious result led by the constraint on the bias in D2. The variance shows results opposite to those of the bias, as depicted in \cref{fig:barvariance}. The variance of CR is zero for all four cases, since CR is deterministic. The second smallest variance is always obtained by D1, and the largest variance is obtained by SR, though the difference between rounding methods SR, D1, D2 is minor. This agrees with the optimization results in \cref{sec:fourdistribution}. \cref{fig:barre} shows the normalized average absolute value of relative error for the four rounding methods for Cases I-IV. For the repeated input variables, Cases I and II, CR has the largest average absolute value of relative error. For the non-repeated input variables, Cases III and IV, the smallest average absolute value of relative error is achieved by CR and the largest one is obtained by SR.
\begin{figure}[t]
	\centering
	\includegraphics[width=0.6\columnwidth]{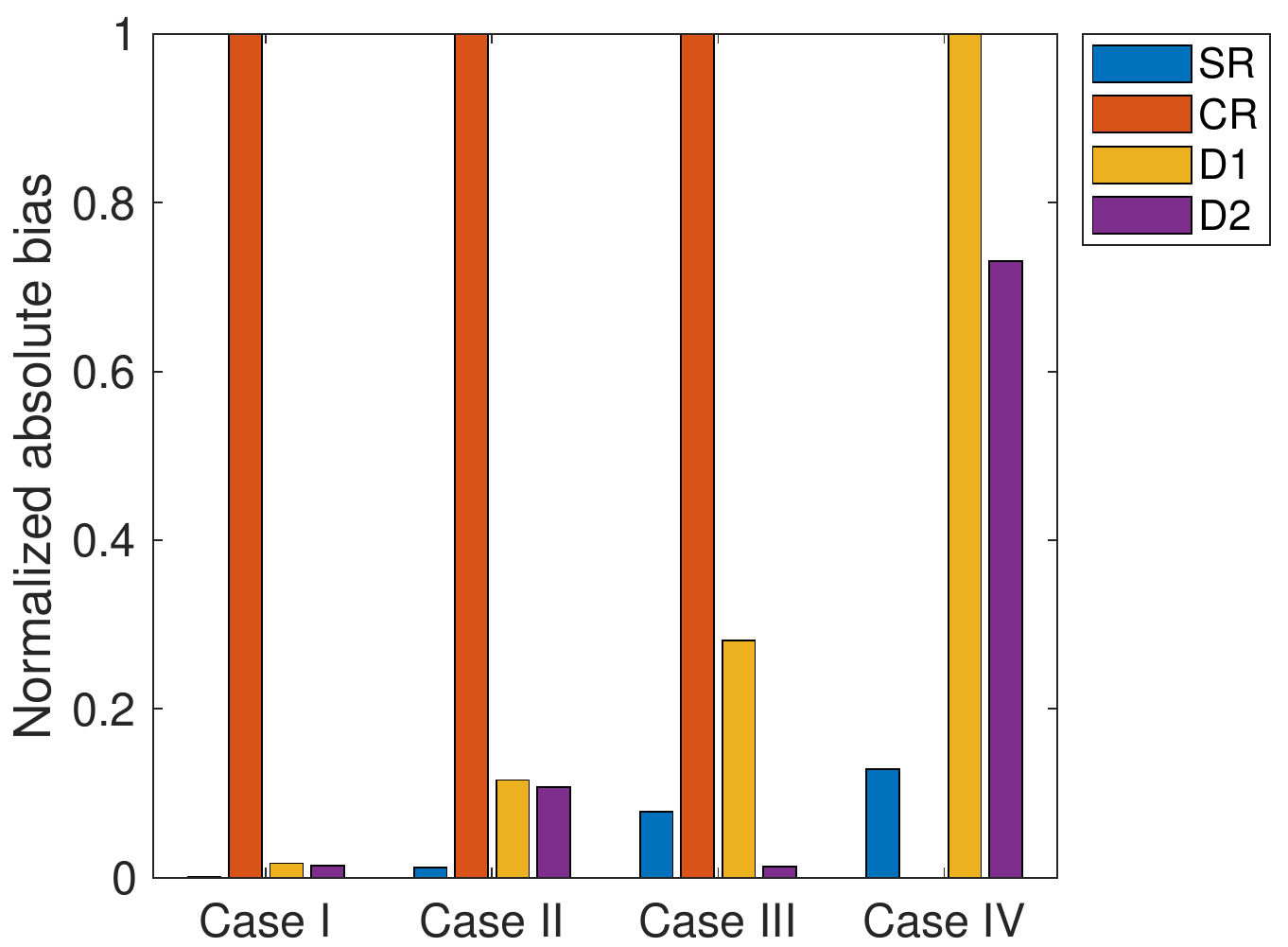}
	\caption{Normalized absolute bias of rounding results using SR, CR, D1 and D2, for Cases I-IV.}\label{fig:barbias}
\end{figure}
\begin{figure}[t]
	\centering
	\subfloat[]{\label{fig:barvariance}\includegraphics[width=0.5\textwidth]{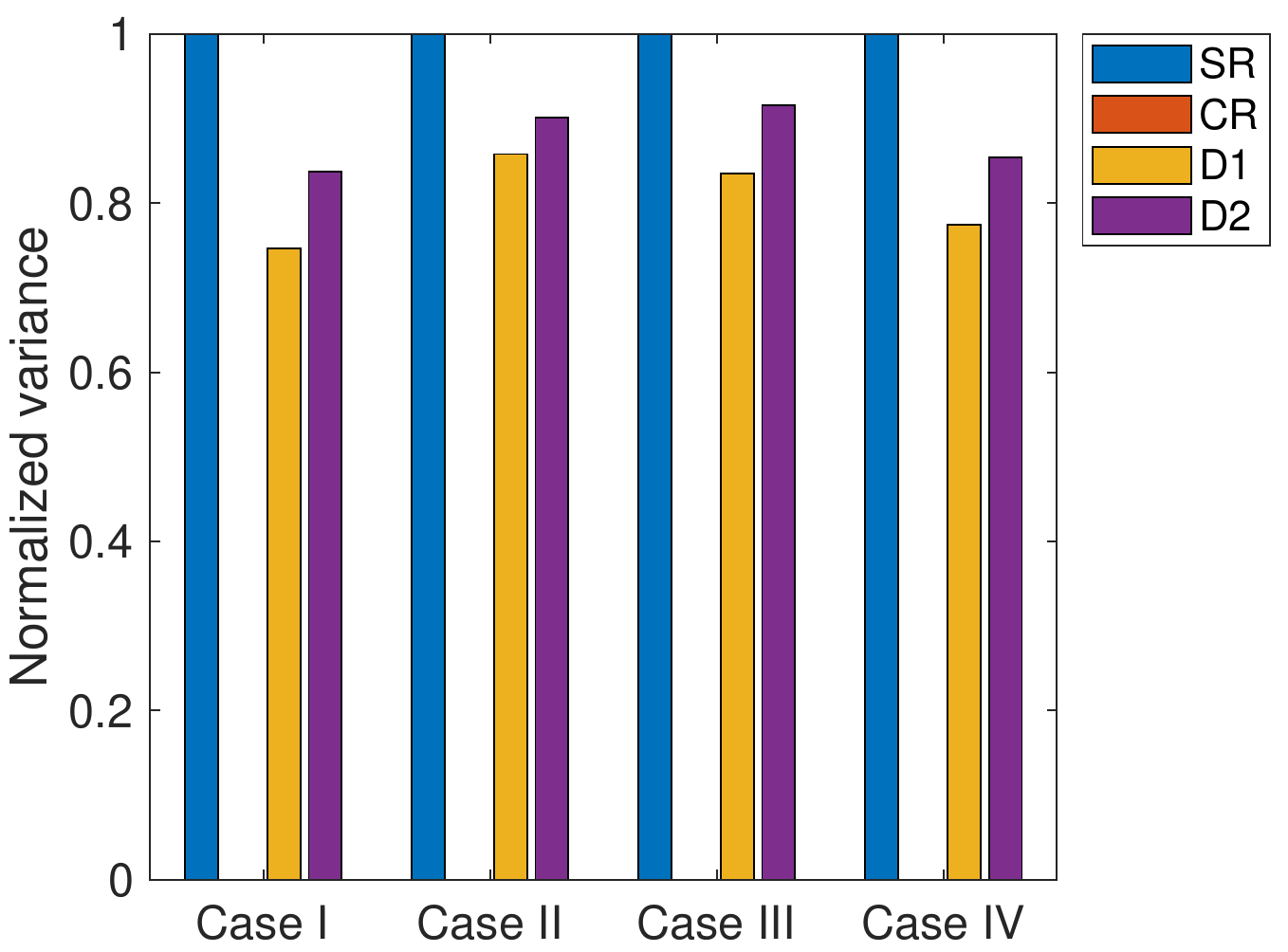}}
	\subfloat[]{\label{fig:barre}\includegraphics[width=0.5\textwidth]{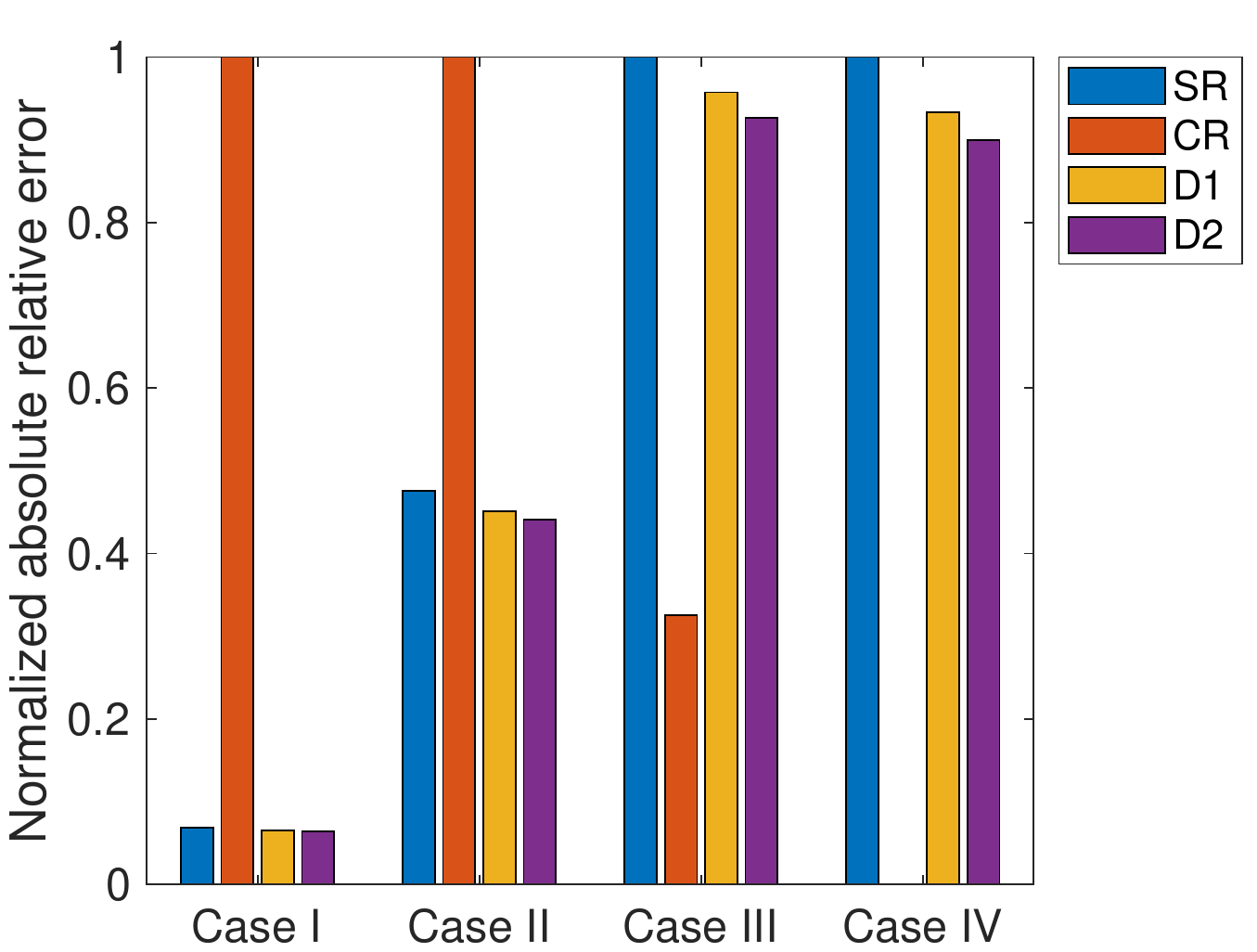}}
	\caption{Normalized variance (a) and absolute relative error (b) of rounding results using SR, CR, D1 and D2, for Cases I-IV.}
\end{figure}
\begin{table}[t]
	{\footnotesize
		\captionsetup{position=top} %<- Needed for using subtables created with the subfig package
		\footnotesize\caption{Non-normalized values of bias, variance and absolute relative error of SR, CR, D1 and D2, for Cases I-IV, with largest values of $\vert B \vert $, $V$ and $e$ marked in red.}\label{tab:specificvalue}
		\begin{center}
			\begin{tabular}{l|l|l|l|l|l}
				\hline
				\multicolumn{2}{c|}{} & Case I & Case II & Case III & Case IV \\ \hline \rule{0pt}{2.3ex}%
				\multirow{4}{*}{$\vert B \vert $} 
				& SR &  \phantom{}$0.4$   &  \phantom{1}$0.82$ & $0.03$ & $0.034$    \\ %\cline{2-7} 
				& CR &  \textcolor{red}{$0.47\cdot 10^{3}$}              & $\textcolor{red}{69.1}$  & \textcolor{red}{$0.4$}   & $0$        \\ %\cline{2-7} 
				& D1 &  \phantom{}$8.05$    &   \phantom{1}$8$ & $0.11$   & \textcolor{red}{$0.22$}    \\ %\cline{2-7} 
				& D2 &  \phantom{}$6.84$    & \phantom{1}$7.42$   & $0.01$   & $0.16$    \\ \hline \rule{0pt}{2.3ex}%
				\multirow{4}{*}{$V$} 
				& SR & \phantom{}\textcolor{red}{$1.73\cdot 10^{3}$}  & \textcolor{red}{\phantom{1}$1.61\cdot 10^{3}$} & \textcolor{red}{$2.29$ } &\textcolor{red}{$3.24$}     \\ %\cline{2-7} 
				& CR &  \phantom{}$0$                    &  \phantom{1}$0$        &  $0$      & $0$         \\ %\cline{2-7} 
				& D1 & \phantom{}$1.29\cdot 10^{3}$  & \phantom{1}$1.38\cdot 10^{3}$ & $1.91$  & $2.51$    \\ %\cline{2-7} 
				& D2 & \phantom{}$1.45\cdot 10^{3}$  & \phantom{1}$1.45\cdot 10^{3}$ & $2.1$  & $2.77$   \\ \hline \rule{0pt}{2.3ex}%
				\multirow{4}{*}{$e $}   
				& SR & \phantom{}$0.0066$   &  \phantom{1}$0.0032$     & \textcolor{red}{$0.2284$}  & \textcolor{red}{$0.0667$}    \\ %\cline{2-7} 
				& CR & \phantom{}\textcolor{red}{$0.0932$}   &  \phantom{1}\textcolor{red}{$0.0069$}     & $0.0741$  & $0$         \\ %\cline{2-7} 
				& D1 & \phantom{}$0.0061$   &  \phantom{1}$0.0031$     & $0.2201$  & $0.0617$    \\ %\cline{2-7} 
				& D2 & \phantom{}$0.0059$   &  \phantom{1}$0.003$    & $0.2086$  & $0.0588$    \\ \hline
			\end{tabular}
		\end{center}
	}
\end{table}

Overall, for the repeated input variables, D1 results in the rounding results with smallest variance among the stochastic rounding methods, as well as small average absolute value of relative error. For the non-repeated input variables, CR performs best in general, with smallest variance and average absolute value of relative error. The non-normalized values of the bias, variance and average absolute value of relative error in \cref{fig:barbias,fig:barvariance,fig:barre} are given in \cref{tab:specificvalue}, in which the largest bias, variance and average absolute value of relative error are marked in red. It can be observed from \cref{tab:specificvalue} that the value of the bias and variance in Cases I and II are much larger than those of Cases III and IV, because the bias and variance in Cases I and II are accumulated by the sequence of summation, where $N_s=10,000$ in \cref{eq:roundedsum}. 

\subsection{Square root calculation using Newton iteration}
\begin{table}[t]
	{\footnotesize
		\captionsetup{position=top} %<- Needed for using subtables created with the subfig package
		\footnotesize\caption{Non-normalized values of $\mu$, $\vert B\vert$, $V$, $e$ and $N_{\rm{it}}$ of SR, CR, D1 and D2 for computing $\sqrt{a}$ using Newton's method, with $\delta=10^{-3}$, with largest values of $\vert B\vert$ and $e $ marked in red.}\label{tab:sqrttest}
		\begin{center}
			\begin{tabular}{l|l|l|l|l|l|l}
				\hline
				\multicolumn{2}{c|}{$a$} & $0.30146$ & $6.55501$ & $51.16904$ & $357.00272$ & $8133.27762$\\ \hline \rule{0pt}{2.3ex}%
				\multirow{4}{*}{$\mu$}
				& SR &  \phantom{}$0.5491$  &  \phantom{}$2.5601$ &  \phantom{1}$7.1531$   & \phantom{1}$18.8946$   &  \phantom{12}$90.1849$  \\ %\cline{2-7} 
				& CR &  \phantom{}$0.548$  &  \phantom{}$2.56$ &  \phantom{1}$7.154$   & \phantom{1}$18.894$   &  \phantom{12}$90.184$  \\ %\cline{2-7} 
				& D1 &  \phantom{}$0.549$  &  \phantom{}$2.5604$ &  \phantom{1}$7.153$   & \phantom{1}$18.8942$   &  \phantom{12}$90.1848$  \\ %\cline{2-7} 
				& D2 &  \phantom{}$0.549$  &  \phantom{}$2.56$ &  \phantom{1}$7.1535$   & \phantom{1}$18.894$   &  \phantom{12}$90.1848$  \\
				\hline \rule{0pt}{2.3ex}%
				\multirow{4}{*}{$\vert B \vert $} 
				& SR &  $6.46\cdot 10^{-5}$  &  $1.58\cdot 10^{-4}$  &  \phantom{1}$1.96\cdot 10^{-4}$ & \phantom{12}$1.29\cdot 10^{-4}$   &  \phantom{123}$1.9\cdot 10^{-4}$  \\ %\cline{2-7} 
				& CR &  \textcolor{red}{$1.05\cdot 10^{-3}$}  &  \textcolor{red}{$2.75\cdot 10^{-4}$}  &  \phantom{1}\textcolor{red}{$7.46\cdot 10^{-4}$} & \phantom{12}\textcolor{red}{$5.16\cdot 10^{-4}$}   &  \phantom{123}\textcolor{red}{$6.86\cdot 10^{-4}$}  \\ %\cline{2-7} 
				& D1 &  $5.42\cdot 10^{-5}$  &  $8.55\cdot 10^{-5}$  &  \phantom{1}$2.54\cdot 10^{-4}$ & \phantom{12}$2.8\cdot 10^{-4}$   &  \phantom{123}$1.44\cdot 10^{-4}$  \\ %\cline{2-7} 
				& D2 &  $2.79\cdot 10^{-5}$  & $2.74\cdot 10^{-4}$   &  \phantom{1}$2.41\cdot 10^{-4}$ & \phantom{12}$5.04\cdot 10^{-4}$   &  \phantom{123}$1.91\cdot 10^{-5}$  \\
				\hline \rule{0pt}{2.3ex}%
				\multirow{4}{*}{$V$} 
				& SR &  $8.2\cdot 10^{-8}$  & $1.04\cdot 10^{-7}$   &  \phantom{1}$4.48\cdot 10^{-8}$  & \phantom{12}$2.29\cdot 10^{-7}$   &   \phantom{123}$1.08\cdot 10^{-7}$ \\ %\cline{2-7} 
				& CR &  \phantom{}$0$     &   \phantom{}0  &  \phantom{1}$0$ &  \phantom{12}$0$   &  \phantom{123}$0$ \\ %\cline{2-7} 
				& D1 & $1.1\cdot 10^{-9}$   &  $2.31\cdot 10^{-7}$  &  \phantom{1}$3.33\cdot 10^{-11}$ & \phantom{12}$1.8\cdot 10^{-7}$   &   \phantom{123}$1.41\cdot 10^{-7}$ \\ %\cline{2-7} 
				& D2 & $1.07\cdot 10^{-7}$   & $1.4\cdot 10^{-9}$   &  \phantom{1}$2.5\cdot 10^{-7}$  & \phantom{12}$1.17\cdot 10^{-8}$   &   \phantom{123}$2.08\cdot 10^{-7}$ \\ 
				\hline \rule{0pt}{2.3ex}%
				\multirow{4}{*}{$e $}   
				& SR & $1.18\cdot 10^{-4}$   &   $6.16\cdot 10^{-5}$  & \phantom{1}$2.73\cdot 10^{-5}$   & \phantom{12}$6.85\cdot 10^{-6}$     &  \phantom{123}$2.11\cdot 10^{-7}$  \\ %\cline{2-7} 
				& CR & \textcolor{red}{$1.92\cdot 10^{-3}$}   &  \textcolor{red}{$1.08 \cdot 10^{-4}$}  & \phantom{1}\textcolor{red}{$1.04\cdot 10^{-4}$}   & \phantom{12}\textcolor{red}{$2.73\cdot 10^{-5}$}   &  \phantom{123}\textcolor{red}{$7.61\cdot 10^{-6}$}  \\ %\cline{2-7} 
				& D1 & $9.88\cdot 10^{-5}$   &  $3.34 \cdot 10^{-5}$  & \phantom{1}$3.55\cdot 10^{-5}$   & \phantom{12}$1.48\cdot 10^{-5}$   & \phantom{123}$1.6\cdot 10^{-6}$   \\ %\cline{2-7} 
				& D2 & $5.09\cdot 10^{-5}$   &  $1.07 \cdot 10^{-4}$  & \phantom{1}$3.36\cdot 10^{-5}$   & \phantom{12}$2.67\cdot 10^{-5}$   &  \phantom{123}$2.11\cdot 10^{-7}$  \\ 
				\hline \rule{0pt}{2.3ex}%
				\multirow{4}{*}{$N_{\rm{it}}$}   
				& SR &  \phantom{}$5.57$  & \phantom{}$5.75$  &  \phantom{1}$7.7$  &  \phantom{12}$9$  & \phantom{12}$11.83$   \\ %\cline{2-7} 
				& CR &  \phantom{}$4$ & \phantom{}$5$  &  \phantom{1}$7$  &  \phantom{12}$8$  & \phantom{12}$11$   \\ %\cline{2-7} 
				& D1 &  \phantom{}$5.66$  & \phantom{}$5.8$  &  \phantom{1}$7.44$  &  \phantom{12}$9$  & \phantom{12}$11.9$   \\ %\cline{2-7} 
				& D2 &  \phantom{}$5.58$  & \phantom{}$5.7$  &  \phantom{1}$7.43$  &  \phantom{12}$9$  & \phantom{12}$11.86$   \\ \hline
			\end{tabular}
		\end{center}
	}
\end{table}
Square root calculation is an approximation process on most processor units such as CPUs, GPUs and FPGAs. It is based on different numerical algorithms. The speed of square root computation is crucial in hardware applications. In this section, the square root operation is studied with different rounding methods, using Newton iteration. The computation precision and speed are studied by rounding to a specific number of decimal digits and implementing integer arithmetic. 

The square root of a number, $\sqrt{a}$, can be iteratively computed by Newton's method by introducing the function $f(x)=x^2-a$, and the iterative process is given by $x_{k+1}=x_{k}-\frac{f(x_k)}{f'(x_k)}=\frac{1}{2}\big(x_k+\frac{a}{x_k}\big)$.
The rounding process can be reformulated as
\begin{align}
\mathrm{fl}(x_{k+1})&=\mathrm{fl}\Big(\frac{1}{2}\Big(\mathrm{fl}(x_k)+\mathrm{fl}\big(\frac{\mathrm{fl}(a)}{\mathrm{fl}(x_k)}\big)\Big)\Big).
\label{eq:sqrtfunround}
\end{align}

In the numerical tests, some random numbers with five decimal digits will be generated, one number in successively $(0, 1)$, $(1, 10)$, $(10, 100)$, $(100, 1000)$ and $(1000, 10000)$. Next, $\sqrt{a}$ is calculated $N$ times, using different rounding schemes, to calculate $\mu$, $B$, $V$, $e$ and the average number of iteration steps ($N_{\rm{it}}$). For each rounding process, the numbers are rounded to three decimal digits. Specifically, the rounding precision is $\delta=10^{-3}$. In Newton's method, the initial guess is $x_0=1$ and $10^{-5}$ is set as the tolerance for convergence and $N_{\rm{max}}=100$ is set as the maximum number of iteration steps to take. A summary of the simulation results is given in \cref{tab:sqrttest}. It can be observed that the largest bias (marked in red) is always obtained by CR. Consequently, the largest average absolute value of relative error (marked in red) is always obtained by CR as well. Still, the value of the largest average absolute value of relative error obtained by CR is generally less than $10^{-3}$, indicating a good accuracy. Furthermore, the resulting average number of iteration steps of CR is integer, since it is a deterministic process. Additionally, D1 shows the most reliable performance of the four rounding schemes, where the average absolute value of relative error of the approximated results using D1 is consistently smaller than $10^{-4}$.
\begin{table}[t]
	{\footnotesize
		\captionsetup{position=top} %<- Needed for using subtables created with the subfig package
		\footnotesize \caption{Non-normalized values of $\mu$, $\vert B\vert$, $V$, $e$ and $N_{\rm{it}}$ of SR, CR, D1 and D2 for computing $\sqrt{a}$ using Newton's method, with $\delta=1$, with smallest values of $\vert B\vert$ and $e$ marked in blue. If the Newton iteration breaks down or does not converge, we write a dash ($-$). }\label{tab:sqrttestinteger}
		\begin{center}
			\begin{tabular}{l|l|c|l|l|l|l}
				\hline
				\multicolumn{2}{l|}{$a$} & $0.30146$ & \phantom{12}$6.55501$ & $51.16904$ & $357.00272$ & $8133.27762$\\ \hline \rule{0pt}{2.3ex}
				\multirow{4}{*}{$\mu$}
				& SR &  $-$    &  \phantom{12}$2.0966$ &  \phantom{1}$7$    & \phantom{1}$19$    &  \phantom{12}$90.7080$  \\ %\cline{2-7} 
				& CR &  $-$     &  \phantom{12}$3$ &  \phantom{1}$7$    & \phantom{1}$19$    &  \phantom{12}$90$  \\ %\cline{2-7} 
				& D1 &  $-$     &  \phantom{12}$2.5913$ &  \phantom{1}$7.0274$    & \phantom{1}$19$   &  \phantom{12}$90.0032$  \\ %\cline{2-7} 
				& D2 &  $-$     &  \phantom{12}$2.9992$ &  \phantom{1}$7$    & \phantom{1}$19$    &  \phantom{12}$90.0004$  \\ \hline \rule{0pt}{2.3ex}
				\multirow{4}{*}{$\vert B\vert $} 
				& SR &  $-$   &  \phantom{12}$0.46$  &  \phantom{1}$0.15$ & \phantom{12}$0.11$   &  \phantom{123}$0.523$  \\ %\cline{2-7} 
				& CR &  $-$   &  \phantom{12}$0.44$  &  \phantom{1}$0.15$ & \phantom{12}$0.11$   &  \phantom{123}$0.185$  \\ %\cline{2-7} 
				& D1 &  $-$   &  \phantom{12}\textcolor{blue}{$0.03$}    &  \phantom{1}\textcolor{blue}{$0.13$} & \phantom{12}$0.11$   &  \phantom{123}\textcolor{blue}{$0.182$}  \\ %\cline{2-7} 
				& D2 &  $-$   &  \phantom{12}$0.44$  &  \phantom{1}$0.15$ & \phantom{12}$0.11$   &  \phantom{123}$0.184$  \\ \hline \rule{0pt}{2.3ex}
				\multirow{4}{*}{$V$} 
				& SR &  $-$   &  \phantom{12}$0.09$   &  \phantom{1}$3.33\cdot 10^{-5}$   & \phantom{12}$0$   &   \phantom{123}$0.21$ \\ %\cline{2-7} 
				& CR &  $-$   &  \phantom{12}$0$        &  \phantom{1}$0$                       & \phantom{12}$0$   &   \phantom{123}$0$ \\ %\cline{2-7} 
				& D1 &  $-$    &  \phantom{12}$0.24$  &  \phantom{1}$0.03$                  & \phantom{12}$0$   &   \phantom{123}$3.2\cdot 10^{-3}$ \\ %\cline{2-7} 
				& D2 &  $-$    &  \phantom{12}$8\cdot 10^{-4}$  &  \phantom{1}$0$                       & \phantom{12}$0$   &   \phantom{123}$4\cdot 10^{-4}$ \\ \hline \rule{0pt}{2.3ex}
				\multirow{4}{*}{$e $}   
				& SR & $-$    &  \phantom{12}$0.18$  & \phantom{1}0.021   & \phantom{12}$5.58\cdot 10^{-3}$     &  \phantom{123}$5.8\cdot 10^{-3}$  \\ %\cline{2-7} 
				& CR & $-$    &  \phantom{12}$0.17$  & \phantom{1}$0.021$   & \phantom{12}$5.58\cdot 10^{-3}$     &  \phantom{123}$2.05\cdot 10^{-3}$  \\ %\cline{2-7} 
				& D1 & $-$    &  \phantom{12}\textcolor{blue}{$0.02$}  & \phantom{1}\textcolor{blue}{$0.018$}   & \phantom{12}$5.58\cdot 10^{-3}$     &  \phantom{123}\textcolor{blue}{$2.01\cdot 10^{-3}$}   \\ %\cline{2-7} 
				& D2 & $-$    &  \phantom{12}$0.17$  & \phantom{1}$0.021$   & \phantom{12}$5.58\cdot 10^{-3}$     &  \phantom{123}$2.04\cdot 10^{-3}$  \\ \hline \rule{0pt}{2.3ex}
				\multirow{4}{*}{$N_{\rm{it}}$}   
				& SR & $-$   &  \phantom{12}$3.71$    &  \phantom{1}$5.49$  &  \phantom{12}$7.27$  & \phantom{123}$9.99$  \\ %\cline{2-7} 
				& CR & $-$   &  \phantom{12}$-$              &  \phantom{1}$6$ &  \phantom{12}$7$  & \phantom{12}$10$  \\ %\cline{2-7} 
				& D1 & $-$   &  \phantom{12}$3.81$    &  \phantom{1}$5.45$  &  \phantom{12}$7.16$  & \phantom{123}$9.97$  \\ %\cline{2-7} 
				& D2 & $-$   &  \phantom{12}$3.79$    &  \phantom{1}$5.47$  &  \phantom{12}$7.21$  & \phantom{123}$9.98$  \\ \hline
			\end{tabular}
		\end{center}
	}
\end{table}

If integer arithmetic is considered in \cref{eq:sqrtfunround}, i.e., when rounding precision $\delta=1$ is applied, repeating the same calculations as in \cref{tab:sqrttest}, the results given in \cref{tab:sqrttestinteger} are obtained. It can be observed that, as expected, the square root of numbers between $[0,1]$ is not solvable using integer arithmetic, since the rounding result of $x_k$ and $a$ can be 0. Moreover, Newton's method fails to converge using the CR method, for 6.55501, using integer arithmetic. It should be noted that the aforementioned results are specific for the numbers given in \cref{tab:sqrttest}, since the numbers are randomly generated. However, in general, rounding method D1 offers the smallest bias and average absolute value of relative error (both marked in blue). Stochastic rounding methods, such as SR, D1 and D2, guarantee faster convergence than CR. Comparing \cref{tab:sqrttest} and \cref{tab:sqrttestinteger}, the approximated square root using rounding to three decimal digits ($\delta=10^{-3}$) is more accurate than that using integer arithmetic, while a faster convergence is obtained by integer arithmetic for all stochastic rounding methods. 

\subsection{Inner product computation}
\label{sec:innerproduct}
In this section, some tests will be performed using different rounding methods for inner product computation. For two vectors ${\bf x}$ and $\mathbf{y}$, the inner product of ${\bf x}$ and $\mathbf{y}$ can be calculated as $\langle\mathbf{x},~\mathbf{y}\rangle=x_1y_1+x_2y_2+\dots +x_{N_s}y_{N_s}$.
Considering the rounding process, it can be formulated as
\begin{equation}
\mathrm{fl}(\langle\mathbf{x},\mathbf{y}\rangle)=\mathrm{fl}\big(\mathrm{fl}(x_1)\mathrm{fl}(y_1)\big)+~\mathrm{fl}\big(\mathrm{fl}(x_2)\mathrm{fl}(y_2)\big)+\dots +~\mathrm{fl}\big(\mathrm{fl}(x_{N_s})\mathrm{fl}(y_{N_s})\big).
\label{eq:innerproduct_round}
\end{equation}
Based on \cref{prop:multiplication}, when rounding to integers is applied, \cref{eq:innerproduct_round} can be simplified to
\begin{equation}
\mathrm{fl}(\langle\mathbf{x},\mathbf{y}\rangle)=\mathrm{fl}(x_1)\mathrm{fl}(y_1)+~\mathrm{fl}(x_2)\mathrm{fl}(y_2)+\dots +~\mathrm{fl}(x_{N_s})\mathrm{fl}(y_{N_s}),
\label{eq:innerproduct_roundtointeger}
\end{equation}
thus the number of roundings is reduced $N_s$ times.

According to \cref{prop:worsterrorbound_multi,remk:remark1}, the presence of small numbers will lead to relative round-off errors larger than 1, when implementing the operation of multiplication. If the inner product is close to zero, the relative errors of inner product can be very large. To enable a better comparison of performance between different rounding methods, in the simulation study, the input vectors are designed to have a low composition of small numbers and the inner product is supposed to be not close to zero. Hence, the vector $\mathbf{x}$ is generated using the sine function with input vector $\mathbf{y}$, where $\mathbf{y}$ is comprised of $N_s$ points distributed equidistantly in $[0,2\pi]$. The sine function is a more appropriate choice than for instance the cosine function, since the combination of $\cos(\mathbf{y})$ and $\mathbf{y}$ has a larger chance to result in the worst-case relative round-off error when using SR. E.g., when $y\to \frac{\pi}{2}$ and $y\to \frac{3\pi}{2}$, where $y\in [0,2\pi]$, we have $x=\cos(y)\to 0$. According to \cref{remk:remark1,fig:contourplot}, the probability of getting a large worst-case relative round-off error will be $\frac{\pi}{2}-\lfloor \frac{\pi}{2} \rfloor \approx 0.57$ and $\frac{3\pi}{2}-\lfloor\frac{3\pi}{2}\rfloor \approx 0.71$, while the probabilities for the sine function are $0.14$ and $0.28$. Furthermore, $\int_{0}^{2\pi}y\cos(y)dy=0$, which also indicates that the inner product is close to 0.
 \begin{table}[t]
 	{\footnotesize
 		\captionsetup{position=top} %<- Needed for using subtables created with the subfig package
 		\footnotesize \caption{Non-normalized values of $\vert B\vert$, $V$ and $e$ of SR, CR, D1 and D2 for computing inner products using integer arithmetic, with largest values of $\vert B\vert$, $V$ and $e$ and smallest values of $\vert B\vert$ and $e$ marked in red and blue, respectively. }\label{tab:innerproduct}
 		\begin{center}
 			\begin{tabular}{c|c|l|l|l|l|l|l}
 				\hline    
 				\multicolumn{2}{c|}{$N_s$} & \multicolumn{1}{c|}{$50$} & \multicolumn{1}{c|}{$200$} & \multicolumn{1}{c|}{$400$} &  \multicolumn{1}{c|}{$600$} & \multicolumn{1}{c|}{$800$} &  \multicolumn{1}{c}{$1000$}\\ \hline \rule{0pt}{2.3ex}
 				\multirow{4}{*}{$\vert B \vert $} 
 				& SR &  \phantom{1}$0.17$ & \phantom{12}\textcolor{blue}{ $0.17$} &  \phantom{12}\textcolor{blue}{$0.11$}    &  \phantom{1}\textcolor{blue}{$0.27$} & \phantom{1}\textcolor{blue}{ $0.44$} & \phantom{1}\textcolor{blue}{$0.75$}\\ %\cline{2-7} 
 				& CR &  \phantom{1}\textcolor{blue}{$0.07$} &  \phantom{12}$9.02$ &  \phantom{1}\textcolor{red}{$17.01$}   &  \textcolor{red}{$29.01$}&  \textcolor{red}{$35$} &\textcolor{red}{$44$}\\ %\cline{2-7} 
 				& D1 &  \phantom{1}\textcolor{red}{$7.12$} &  \phantom{1}\textcolor{red}{$11.13$} &  \phantom{1}$15.01$  &  $19.3$ & $24.14$ &$29.05$\\ %\cline{2-7} 
 				& D2 &  \phantom{1}$6.93$ &  \phantom{12}$9.9$ &  \phantom{1}$13.53$   &  $16.91$ & $20.09$ &$24.5$\\ \hline \rule{0pt}{2.3ex}
 				\multirow{4}{*}{$V$} 
 				& SR &  \textcolor{red}{$96.02$}   &  \textcolor{red}{$382.6$}  & \textcolor{red}{$768.52$}   &  \phantom{1}\textcolor{red}{$1.14\cdot 10^{3}$} & \phantom{1}\textcolor{red}{$1.49\cdot 10^{3}$} &\phantom{1}\textcolor{red}{$1.94\cdot 10^{3}$}\\ %\cline{2-7} 
 				& CR &  \phantom{1}$0$        &  \phantom{12}$0$         &  \phantom{12}$0$          &  \phantom{1}$0$    &\phantom{1}$0$        & \phantom{1}$0$\\ %\cline{2-7} 
 				& D1 &  $75.02$   &  $305.26$  & $609.2$   &   \phantom{1}$0.91\cdot 10^{3}$            &\phantom{1}$1.22\cdot 10^{3}$  &\phantom{1}$1.52\cdot 10^{3}$\\ %\cline{2-7} 
 				& D2 &  $80.61$   &  $327.37$  & $654.95$   &   \phantom{1}$0.99\cdot 10^{3}$            &\phantom{1}$1.29\cdot 10^{3}$  &\phantom{1}$1.65\cdot 10^{3}$\\ \hline \rule{0pt}{2.3ex}
 				\multirow{4}{*}{$e $}   
 				& SR & \phantom{1}$0.161$   & \phantom{12}$0.078$   &  \phantom{12}$0.055$     &  \phantom{1}\textcolor{blue}{$0.045$}  &\phantom{1}\textcolor{blue}{$0.039$}  &\phantom{1}\textcolor{blue}{$0.035$}\\ %\cline{2-7} 
 				& CR & \phantom{1}\textcolor{blue}{$0.001$}   & \phantom{12}\textcolor{blue}{$0.045$}   &  \phantom{12}\textcolor{blue}{$0.043$}     &  \phantom{1}\textcolor{red}{$0.048$}  &\phantom{1}\textcolor{red}{$0.044$}  &\phantom{1}\textcolor{red}{$0.044$}\\ %\cline{2-7} 
 				& D1 & \phantom{1}$0.188$   & \phantom{12}\textcolor{red}{$0.084$}   &  \phantom{12}$0.058$    &  \phantom{1}$0.048$  &\phantom{1}$0.043$  &\phantom{1}$0.04$\\ %\cline{2-7} 
 				& D2 & \phantom{1}\textcolor{red}{$0.189$}   & \phantom{12}$0.083$   &  \phantom{12}\textcolor{red}{$0.058$}     &  \phantom{1}$0.047$  &\phantom{1}$0.042$  &\phantom{1}$0.038$ \\ \hline
 			\end{tabular}
 		\end{center}
 	}
 \end{table}

 \cref{tab:innerproduct} shows the bias, variance and average absolute value of relative error of \cref{eq:innerproduct_roundtointeger} for computing inner products using integer arithmetic with the different rounding methods, over 10,000 times repetition, for different vector sizes $N_s$. It can be observed from \cref{tab:innerproduct} that SR guarantees the smallest bias (marked in blue), for $N_s\geq200$, but that it also yields the largest variance (marked in red). The average absolute value of relative error obtained by CR first increases with increasing size of the vector $N_s$, but becomes more or less constant for $N_s>200$, whereas the average absolute values of relative errors obtained by the stochastic rounding methods decrease. For $N_s \leq 400$, CR provides a more accurate rounding result than the stochastic rounding methods. However, for $N_s>400$, the average absolute values of the relative errors caused by stochastic rounding methods decrease and eventually become smaller than the relative error of CR. Overall, if the vector size is large, stochastic rounding methods are the better choice to compute inner products with integer arithmetic, though the variance may be very large. For vectors with small size, CR can provide a rounding result as accurate as SR, where CR has zero variance moreover. Among these stochastic rounding methods, SR always guarantees the smallest bias and average absolute value of relative error. However, when $N_s$ increases, the difference of the average absolute value of relative error between each stochastic rounding method is smaller, while variances obtained by D1 and D2 are both smaller than that obtained by SR. Hence, D1 and D2 are the better options, when variance and bias are both important to the rounding result for calculating inner products with large vector size. It should be noted that the behavior of each rounding method in case of inner product operation is very similar to the behavior for summation. SR guarantees the smallest bias, but along with largest variance. D1 and D2 provide smaller variances but slightly larger bias than SR. To achieve the best rounding results, some prior knowledge of the data set is necessary for choosing a rounding mode. 

\section{Conclusion}
\label{sec:conclusion}
Rounding is an essential step in many computations; round-off errors are unavoidable. Deterministic rounding methods generally suffer from rounding bias, while stochastic rounding methods normally have a large rounding variance. In this paper, a systematic way has been proposed to generate a stochastic rounding method with probability distribution that can provide customized rounding bias and variance. As opposed to the conventional stochastic rounding method, the proposed method enables users to tune the rounding probability in different applications, without introducing much extra computational cost. The probability distribution wished for is obtained by formulating a multi-objective optimization problem, which is solved offline using particle swarm optimization. 

Numerical experiments have been performed to analyze the bias, variance and relative error of different rounding methods, such as conventional stochastic rounding, convergent rounding and stochastic rounding with new probability distributions, by implementing three operations: summation, square root calculation through Newton iteration and inner product computation. It has been shown that the rounding results vary in different operations. The proposed stochastic rounding provides the smallest average absolute value of relative rounding error in summation, for repeated input variables. For non-repeated input variables, the smallest average absolute value of relative error is achieved by convergent rounding. Furthermore, the proposed probability distribution of stochastic rounding also offers the best rounding performance in the square root calculation using Newton's method, where the average absolute value of relative error is consistently smaller than $10^{-4}$ for all test cases. Additionally, stochastic rounding methods lead to a faster convergence than convergent rounding when integer arithmetic is applied, in Newton's method. Moreover, by employing integer arithmetic in inner product operations, the number of rounding processes is largely reduced. Stochastic rounding methods show better rounding results with large vector sizes, whereas convergent rounding provides better approximations with small vector sizes.

%\newpage  
\section*{Acknowledgments}
This research was funded by the EU ECSEL Joint Undertaking under grant agreement no.~826452 (project Arrowhead Tools).


\begin{thebibliography}{10}

\bibitem{allton1989stochastic}
{\sc C.~Allton, C.~Yung, and C.~Hamer}, {\em Stochastic truncation method for
  {Hamiltonian} lattice field theory}, Phys. Rev. D, 39 (1989), p.~3772.

\bibitem{caramia2008multi}
{\sc M.~Caramia and P.~Dell'Olmo}, {\em Multi-objective management in freight
  logistics: Increasing capacity, service level and safety with optimization
  algorithms}, Springer Science \& Business Media, 2008.

\bibitem{censor1977pareto}
{\sc Y.~Censor}, {\em Pareto optimality in multiobjective problems}, Appl.
  Math. Opt., 4 (1977), pp.~41--59.

\bibitem{cowlishaw2003decimal}
{\sc M.~F. Cowlishaw}, {\em Decimal floating-point: Algorism for computers}, in
  Proceedings of the 16th IEEE Symposium on Computer Arithmetic, IEEE, 2003,
  pp.~104--111.

\bibitem{gupta2015deep}
{\sc S.~Gupta, A.~Agrawal, K.~Gopalakrishnan, and P.~Narayanan}, {\em Deep
  learning with limited numerical precision}, in Proceedings of the 32nd
  International Conference on Machine Learning, 2015, pp.~1737--1746.

\bibitem{higham2019simulating}
{\sc N.~J. Higham and S.~Pranesh}, {\em Simulating low precision floating-point
  arithmetic.}, Manchester Institute for Mathematical Sciences, 2019,
  \url{http://eprints.maths.manchester.ac.uk/2692/}.

\bibitem{higham2019squeezing}
{\sc N.~J. Higham, S.~Pranesh, and M.~Zounon}, {\em Squeezing a matrix into
  half precision, with an application to solving linear systems}, SIAM J. Sci.
  Comput., 41 (2019), pp.~A2536--A2551.

\bibitem{hopkins2019stochastic}
{\sc M.~Hopkins, M.~Mikaitis, D.~R. Lester, and S.~Furber}, {\em Stochastic
  rounding and reduced-precision fixed-point arithmetic for solving neural
  {ODEs}}, arXiv preprint arXiv:1904.11263,  (2019).

\bibitem{kahan1996ieee}
{\sc W.~Kahan}, {\em {IEEE} standard 754 for binary floating-point arithmetic},
  Lecture Notes on the Status of {IEEE}, 754 (1996),
  \url{http://http.cs.berkeley.edu/~wkahan/ieee754status/ieee754.ps}.

\bibitem{PSO2015}
{\sc F.~Marini and B.~Walczak}, {\em Particle swarm optimization ({PSO}). {A}
  tutorial}, Chemometr. Intell. Lab. Syst., 149 (2015), pp.~153--165.

\bibitem{na2017chip}
{\sc T.~Na, J.~H. Ko, J.~Kung, and S.~Mukhopadhyay}, {\em On-chip training of
  recurrent neural networks with limited numerical precision}, in Proceedings
  of the 2017 International Joint Conference on Neural Networks (IJCNN), IEEE,
  2017, pp.~3716--3723.

\bibitem{nightingale1986gap}
{\sc M.~Nightingale and H.~Bl{\"o}te}, {\em Gap of the linear spin-1
  {Heisenberg} antiferromagnet: A {Monte} {Carlo} calculation}, Phys. Rev. D,
  33 (1986), p.~659.

\bibitem{ortiz2018low}
{\sc M.~Ortiz, A.~Cristal, E.~Ayguad{\'e}, and M.~Casas}, {\em Low-precision
  floating-point schemes for neural network training}, arXiv preprint
  arXiv:1804.05267,  (2018).

\bibitem{price1993stochastic}
{\sc P.~Price, C.~Hamer, and D.~O'Shaughnessy}, {\em Stochastic truncation for
  the (2+1){D} {Ising} model}, J. Phys. A, 26 (1993), p.~2855.

\bibitem{santoro1989rounding}
{\sc M.~R. Santoro, G.~Bewick, and M.~A. Horowitz}, {\em Rounding algorithms
  for {IEEE} multipliers}, in Proceedings of 9th Symposium on Computer
  Arithmetic, IEEE, 1989, pp.~176--183.

\bibitem{wang2018training}
{\sc N.~Wang, J.~Choi, D.~Brand, C.-Y. Chen, and K.~Gopalakrishnan}, {\em
  Training deep neural networks with 8-bit floating point numbers}, in
  Proceedings of the 31st Neural Information Processing Systems Conference,
  2018, pp.~7675--7684.

\bibitem{xia2019constrained}
{\sc L.~Xia, R.~Willems, B.~de~Jager, and F.~Willems}, {\em Constrained
  optimization of fuel efficiency for {RCCI} engines}, IFAC-PapersOnLine, 52
  (2019), pp.~648--653.

\end{thebibliography}
\end{document}